\definecolor{darkgreen}{rgb}{0,0.45,0} 
\let\ea\expandafter
\def\foreachletter#1#2#3{\foreachcount=#1
  \ea\loop\ea\ea\ea#3\@alph\foreachcount
  \advance\foreachcount by 1
  \ifnum\foreachcount<#2\repeat}
\def\foreachLetter#1#2#3{\foreachcount=#1
  \ea\loop\ea\ea\ea#3\@Alph\foreachcount
  \advance\foreachcount by 1
  \ifnum\foreachcount<#2\repeat}
\def\definescr#1{\ea\gdef\csname s#1\endcsname{\ensuremath{\mathscr{#1}}}}
\def\definecal#1{\ea\gdef\csname c#1\endcsname{\ensuremath{\mathcal{#1}}}}
\def\definebold#1{\ea\gdef\csname b#1\endcsname{\ensuremath{\mathbf{#1}}}}
\def\defineul#1{\ea\gdef\csname u#1\endcsname{\ensuremath{\underline{\smash{\mathsf{#1}}}}}}
\let\al\alpha
\newcommand{\ten}{\ensuremath{\otimes}}
\newcommand{\inv}{^{-1}}
\newcommand{\sm}{\mathbin{\wedge}}
\newcommand{\exsm}{\mathbin{\overline{\wedge}}}
\newcommand{\ep}{\ensuremath{\varepsilon}}
\newcommand{\ph}{\ensuremath{\varphi}}
\newcommand{\Id}{\ensuremath{\operatorname{Id}}}
\newcommand{\id}{\ensuremath{\operatorname{id}}}
\newcommand{\Ho}{\ensuremath{\operatorname{Ho}}}
\newcommand{\Hom}{\ensuremath{\operatorname{Hom}}}
\newcommand{\op}{\ensuremath{^{\mathit{op}}}}
\newcommand{\tr}{\ensuremath{^{\top}}}
\newcommand{\adj}{\dashv}
\newcommand{\iso}{\cong}
\newcommand{\too}[1][]{\ensuremath{\overset{#1}{\longrightarrow}}}
\newcommand{\toot}{\ensuremath{\rightleftarrows}}
\newcommand{\toto}{\ensuremath{\rightrightarrows}}
\newcommand{\leftwe}{\ensuremath{\overset{\sim}{\longleftarrow}}}
\newcommand{\rightwe}{\ensuremath{\overset{\sim}{\longrightarrow}}}
\newcommand{\maps}{\colon}
\newcommand{\spam}{\,:\!}
\newcommand{\pullbackcorner}[1][dr]{\save*!/#1-1.2pc/#1:(-1,1)@^{|-}\restore}
\let\xto\xrightarrow
\newif\ifhyperref
  \def\defthm#1#2{%
    \newtheorem{#1}{#2}[section]%
    \expandafter\def\csname #1autorefname\endcsname{#2}%
    \expandafter\let\csname c@#1\endcsname\c@thm}
  \def\defthm#1#2{\newtheorem{#1}[thm]{#2}}
\let\SK@label\label\fi
  \let\old@label\label
  \let\your@thm\@thm
  \def\@thm#1#2#3{\gdef\currthmtype{#3}\your@thm{#1}{#2}{#3}}
  \def\currthmtype{}
  \def\label#1{{\let\your@currentlabel\@currentlabel\def\@currentlabel%
      {\currthmtype~\your@currentlabel}%
      \SK@label{#1@}}\old@label{#1}}
  \def\autoref#1{\ref{#1@}}
\newtheorem{thm}{Theorem}[section]
\theoremstyle{definition}
\theoremstyle{remark}
\newenvironment{blist}{\begin{list}{\labelitemi}{\leftmargin=1.5em\labelwidth=1em}}{\end{list}}
\let\c@equation\c@thm
\numberwithin{equation}{section}
\def\autofmt@b#1\autofmt@end{\mathbf{#1}}
\def\autofmt@c#1#2\autofmt@end{\mathcal{#1}\mathit{#2}}
\def\autofmt@s#1#2\autofmt@end{\mathscr{#1}\mathit{#2}}
\def\autofmt@f#1\autofmt@end{\mathfrak{#1}}
\def\autofmt@u#1\autofmt@end{\underline{\smash{\mathsf{#1}}}}
\def\auto@drop#1{}
\def\autodef#1{\ea\ea\ea\@autodef\ea\ea\ea#1\ea\auto@drop\string#1\autodef@end}
\def\@autodef#1#2#3\autodef@end{%
  \ea\def\ea#1\ea{\ea\ensuremath\ea{\csname autofmt@#2\endcsname#3\autofmt@end}}}
\def\autodefs@end{blarg!}
\def\autodefs#1{\@autodefs#1\autodefs@end}
\def\@autodefs#1{\ifx#1\autodefs@end%
  \def\autodefs@next{}%
  \else%
  \def\autodefs@next{\autodef#1\@autodefs}%
  \fi\autodefs@next}
\newcommand{\ftwoCat}{\ensuremath{\mathsf{2}\text{-}\fCat}}
\newcommand{\cModell}{\ensuremath{\cModel_L}}
\newcommand{\cModelr}{\ensuremath{\cModel_R}}
\newcommand{\bbTAlg}{\ensuremath{T\text-\uAlg}}
\newcommand{\cTAlg}{\ensuremath{T\text-\cAlg}}
\newcommand{\conj}{\Yleft}
\newcommand{\comp}{\approxeq}
\title{Comparing composites of left and right derived functors}
\author{Michael Shulman}
\email{mshulman@ucsd.edu}
\begin{document}
\maketitle

\begin{abstract}
  We introduce a new categorical framework for studying derived
  functors, and in particular for comparing composites of left and
  right derived functors.  Our central observation is that model
  categories are the objects of a double category whose vertical and
  horizontal arrows are left and right Quillen functors, respectively,
  and that passage to derived functors is functorial at the level of
  this double category.  The theory of conjunctions and mates in
  double categories, which generalizes the theory of adjunctions and
  mates in 2-categories, then gives us canonical ways to compare
  composites of left and right derived functors.  We give a number of
  sample applications, most of which are improvements of existing
  proofs in the literature.
\end{abstract}

\tableofcontents

{\renewcommand\thefootnote{}
\footnotetext{This paper was published in the \emph{New York Journal of Mathematics}, Vol.~17 (2011), p75--125.  The published version is available at \url{http://nyjm.albany.edu/j/2011/17-5.html}.}}

\part{Theory}
\label{sec:theory}

\section{Introduction}
\label{sec:introduction}

Part of the general philosophy of category theory is that morphisms
are often more important and subtler than objects.  This applies also
to categories and functors themselves, as well as to more complicated
categorical structures, such as model categories and Quillen
adjunctions.  The passage from a model category to its homotopy
category is well understood, but the passage from Quillen functors to
derived functors seems more subtle and mysterious.  In particular, the
distinction between \emph{left} and \emph{right} derived functors is
not well understood at a conceptual level.

For instance, it is well-known that taking derived functors of Quillen
functors between model categories is pseudofunctorial---as long as all
derived functors involved have the same ``handedness.''  In other words, we
have coherent isomorphisms such as $\bL(G\circ F) \iso \bL G \circ \bL F$.
However, not infrequently it happens that we want to compose a Quillen
left adjoint with a Quillen right adjoint, and compare the result with
another such composite.  Standard model category theory has little to
say about such questions, but such comparisons are often essential in
applications.

For example, the authors of~\cite{maysig:pht} construct, for every
base space $B$, a model category $\bEx_B$ of ex-spaces over $B$, and
for every continuous map $f\maps A\to B$, a Quillen adjunction
$f_!\maps \bEx_A \toot \bEx_B \spam f^*$.  The derived functor $\bL
f_!$ is a parametrized version of homology, while the additional right
adjoint $f_*$ of $\bR f^*$ (which is shown to exist using Brown
representability) is a parametrized version of cohomology.  One of
their central lemmas about these adjunctions is that for any pullback
square
\[\vcenter{\xymatrix{A \ar[r]^h\ar[d]_f \pullbackcorner & B \ar[d]^g\\
    C\ar[r]_k & D}}
\]
of base spaces, there are isomorphisms $\bL f_!\circ \bR h^* \iso \bR
k^*\circ \bL g_!$ of derived functors, as long as either $g$ or $k$ is
a fibration.  (This sort of result is sometimes called a
\emph{Beck-Chevalley condition}.)  Another analogous result is the
proper base change theorem in sheaf theory.  The aim of the current
paper is to provide a general categorical framework in which to speak
about such comparisons.

We should stress at the outset that we will not prove any general theorem
about when two composites of left and right derived
functors are isomorphic.  Like the question of whether a given Quillen
adjunction is a Quillen equivalence, the way to attack this question
seems to depend a great deal on the particular situation.  What we do
give is a calculus describing the relationships between the natural
transformations which compare such composites, which generalizes the
calculus of ``mates'' in 2-categories (see \S\ref{sec:mates}).
This gives a general framework in which to speak about such
comparisons, and a way to make them more precise by identifying the
particular natural transformation which is an isomorphism.

Many well-known theorems in homotopy theory and homological algebra,
such as the proper base change theorem and the lemma
from~\cite{maysig:pht} mentioned above, can be restated in this
language.  We will work out a number of such examples in the second
half of the paper.  Usually, the existing proofs of such theorems
already include all the real work, and only a little reformulation is
required to place them in our abstract context.  There are two main
advantages accruing from the small amount of work involved in such a
reformulation.
\begin{enumerate}
\item When trying to prove a new commutation result between left and
  right derived functors, the abstract framework reduces the problem
  from \emph{finding} an isomorphism to \emph{proving} that a
  particular, canonically specified map is a weak equivalence.
\item The result obtained is strengthened from the mere
  \emph{existence} of an isomorphism, which is usually all that the
  standard proofs provide, to the statement that a specific
  \emph{canonically defined} map is an isomorphism.  In some cases
  this does not matter, but in others (for instance, when coherence
  questions arise) it does.
\end{enumerate}
In addition, we believe that our abstract framework sheds conceptual
light on the distinction between left and right derived functors.

The central idea of this paper is to upgrade the category (or
2-category) of model categories and Quillen adjunctions to a more
expressive structure called a \emph{double category}.
The standard 2-category of model categories
and Quillen adjunctions is a somewhat uncomfortable thing,
since to define it one must choose whether to
consider a Quillen adjunction as pointing in the direction of the
right adjoint or the left adjoint, and either choice is asymmetrical
and aesthetically unsatisfactory.  A double category, on the other
hand, can include both the left and right Quillen functors as
different types of morphism.  Quillen adjunctions then appear as
``conjunctions'' in this double category.  The central observation
enabling us to compare left and right derived functors is that the
passage from Quillen functors to derived functors is a functor of
double categories.

Category theorists will be interested to see that there is also a
formal analogy between \emph{left} Quillen functors and \emph{colax}
monoidal functors (or colax morphisms for any 2-monad), and between
\emph{right} Quillen functors and \emph{lax} monoidal functors.  A
functor which is both left and right Quillen corresponds to a strong
monoidal functor, while a Quillen adjunction corresponds to a
``doctrinal'' or ``lax/colax adjunction.''

The plan of this paper is as follows.  In the first part, comprising
\S\S\ref{sec:mates}--\ref{sec:derivable-categories}, we develop the
general theory.  We begin in \S\ref{sec:mates} by
summarizing the theory of \emph{mates} in 2-categories, which provides a
general way to construct transformations comparing composites of
adjoints; this is familiar to category theorists, but not as widely
known as it should be.  Then in \S\ref{sec:der-func} we summarize the
standard theory of derived functors and note its deficiencies.  The
next three sections are devoted to setting up the double-categorical
machinery we need.  In \S\ref{sec:dblcats} we recall the notion of
double category and give our main examples, including the double
category of model categories.  In \S\ref{sec:compconj} we describe the
theory of \emph{companions} and \emph{conjoints} in double categories,
which generalizes the calculus of adjunctions and mates in
2-categories.  And in \S\ref{sec:double-psfrs} we define the relevant type
of functor between double categories, which we call a \emph{double
  pseudofunctor}; it differs from the most common notions of functor
between double categories in that it only preserves the structure weakly in
\emph{both} directions.

The main result of the paper is proven in
\S\ref{sec:derived-functors}: passage to homotopy categories and
derived functors is a double pseudofunctor on the double category of
model categories.  In \S\ref{sec:derivable-categories} we extend
this result to a more general context, involving categories equipped
with ``cofibrant'' and ``fibrant'' approximations that admit more
flexibility than those in a model structure; this generality turns out
to be important in many examples.  We call these \emph{derivable
  categories} and the morphisms between them \emph{derivable
  functors}; they are closely related to the \emph{deformable
  functors} of~\cite{dhks:holim}.

In the second part of the paper, comprising
\S\S\ref{sec:beck-chev-cond}--\ref{sec:projfmla-ex}, we work out a
number of example applications.  Our goal is to show how the general theory can be applied in
practice to compare composites of left and right derived functors, and
to provide templates for future applications.  In most of the
examples we consider, the \emph{existence} of an isomorphism is known;
our contribution is to put all these facts in a general framework and
show that the isomorphisms involved are actually the canonically
defined maps which one would hope to be isomorphisms.

The procedure in all these examples is the following: use the general
theory to identify a point-set-level representative of the canonically
defined map in question, then invoke facts specific to the domain at
hand to show that this map is (or, in some cases, is not) a weak
equivalence.  In general, the application of the general theory is
easy, and the domain-specific facts are the same ones used in the
standard proofs that an isomorphism exists (so that, in particular,
the isomorphism constructed in the classical proof is in fact the
canonical one).  We do, however, include one example in
\S\ref{sec:projfmla-ex} where the general theory does not apply so
cleanly and a medium-sized diagram chase is still required.  But even
in this case, the general theory simplifies the problem significantly
and provides a context in which to ask the right questions.

Our reference for model category theory is~\cite{hovey:modelcats}; in
particular, we assume our model categories to be equipped with
functorial factorizations.  This is not strictly necessary, but it
will make things easier.  A good reference for the 2-category theory
we will need is the first few sections of~\cite{ks:r2cats}.

I would like to thank my thesis advisor, Peter May, for helpful
conversations about derived functors, and the referee, for pointing
out that more concrete examples were necessary.

\section{Mates}
\label{sec:mates}

Since left and right derived functors are, in particular, left and
right adjoints, we begin by considering how to compare composites of
left and right adjoints.  The primary tool used for this purpose is
the theory of \emph{mates} in 2-categories.  Though straightforward,
this theory is not as well-known as it should be, and is thus
frequently reinvented.  Here we give a brief overview; a definitive
treatment can be found in~\cite{ks:r2cats}.

The most basic form of the mate correspondence says that if
$f^*,g^*\maps \sA\toto \sB$ are parallel functors with left adjoints $f_!$
and $g_!$, respectively, then there is a bijection between natural
transformations $f_!\to g_!$ and natural transformations $g^*\to f^*$.
A pair of natural transformations that correspond to each other under
this bijection are called \textbf{mates} (or sometimes ``conjugates'' or
``adjuncts'').

More generally, for functors $f^*\maps \sA\to \sB$ and $g^*\maps
\sC\to \sD$ with left adjoints $f_!$ and $g_!$, and any functors
$h^*\maps \sD\to \sB$ and $k^*\maps \sC\to \sA$, there is a bijection
between natural transformations $f_! h^* \to k^* g_!$ and natural
transformations $h^* g^* \to f^* k^*$, i.e.\ between transformations
\[\vcenter{\xymatrix{\sD \ar[r]^{h^*}\ar[d]_{g_!} \drtwocell\omit &
    \sB \ar[d]^{f_!}\\
    \sC\ar[r]_{k^*} & \sA}} \qquad\text{and}\qquad
\vcenter{\xymatrix{\sD \ar[r]^{h^*}\ar@{<-}[d]_{g^*}  &
    \sB \ar@{<-}[d]^{f^*}\\
    \sC\ar[r]_{k^*} \urtwocell\omit & \sA.}}
\]
Explicitly, the mate of $\alpha\maps f_! h^* \to k^* g_!$ is the composite
\[ h^* g^* \xto{\eta h^* g^*} f^* f_! h^* g^* \xto{f^* \alpha g^*} f^*
k^* g_! g^* \xto{f^*k^* \ep} f^*h^*,
\]
where $\eta$ is the unit of the adjunction $f_!\adj f^*$ and $\ep$ is
the counit of the adjunction $g_! \adj g^*$.  This is also commonly
described as the ``pasting composite''
\[\vcenter{\xymatrix{
    &
    \sD \ar@{}[]="D" \ar[r]^{h^*}\ar[d]^{g_!} \drtwocell\omit
    &
    \sB \ar[d]_{f_!} \ar@{=}[r]|{}="mid1" &
    \sB\\
    \sC \ar@{=}[r]|{}="mid2" \ar[ur]^{g^*}
    \ar@{}"D";"mid2"|(.6){\Downarrow}
    &
    \sC\ar[r]_{k^*} & \sA \ar[ur]_{f^*} \ar@{}[]="A"
    \ar@{}"mid1";"A" |(.4){\Downarrow}
  }}\]
Dually, the mate of $\beta\maps h^* g^* \to f^* k^*$ is the composite
\[ f_! h^* \xto{f_! h^* \eta} f_!h^*g^*g_! \xto{f_!\beta g_!}
f_!f^*k^*g_! \xto{\ep k^*g_!} k^* g_!.
\]
The triangular diagrams for the adjunctions $f_!\adj f^*$ and $g_!\adj
g^*$ are precisely what is required to make these into inverse
bijections.

In general, the mate of an isomorphism need not be an isomorphism, but
there are two important situations in which it is.

\begin{lem}\label{thm:hkid-mate-iso}
  If $h^*$ and $k^*$ are identities, then a transformation $f_!\to
  g_!$ is an isomorphism if and only if its mate $g^*\to f^*$ is an
  isomorphism.
\end{lem}
\begin{proof}
  The mate of the inverse of one supplies an inverse to the other.
\end{proof}

Note that this includes the standard fact that any two right adjoints
of a given functor are canonically isomorphic.

\begin{lem}\label{thm:adjeqv-mate-iso}
  If $f_!\adj f^*$ and $g_!\adj g^*$ are adjoint equivalences, then a
  transformation $f_!h^* \to k^*g_!$ is an isomorphism if and only if
  its mate $h^*g^* \to f^*k^*$ is an isomorphism.
\end{lem}
\begin{proof}
  In this case the $\eta$ and $\ep$ appearing in the definition of
  mates are isomorphisms, so composing with them preserves
  invertibility.
\end{proof}

However, in cases other than these, whether or not a given mate is an
isomorphism can have substantial mathematical content.  Here are two
examples; we will see more in
\S\S\ref{sec:beck-chev-cond}--\ref{sec:projfmla-ex}.

\begin{eg}\label{eg:beck-chev}
  In the situation of~\cite{maysig:pht} mentioned in the introduction,
  we have a category $\bEx_B$ associated to every space $B$ and an
  adjunction $f_!\maps \bEx_A\toot\bEx_B\spam f^*$ to every continuous
  map $f\maps A\to B$, and moreover for any commutative square
  \begin{equation}
    \vcenter{\xymatrix{A \ar[r]^h\ar[d]_f & B \ar[d]^g\\
        C\ar[r]_k & D}}\label{eq:bc-down}
  \end{equation}
  of continuous maps, we have a natural isomorphism
  \begin{equation}
    h^*g^*\xto{\iso} f^*k^*.\label{eq:bc-orig}
  \end{equation}
  This isomorphism has a mate
  \begin{equation}
    f_! h^* \too k^* g_!\label{eq:bc-mate}
  \end{equation}
  which may or may not be an isomorphism, depending on the
  square~\eqref{eq:bc-down}; in particular, it is an isomorphism
  whenever~\eqref{eq:bc-down} is a pullback square.  In category
  theory, the property of~\eqref{eq:bc-mate} being an isomorphism is
  often called the \emph{Beck-Chevalley condition} for the
  square~\eqref{eq:bc-down}.

  This example is paradigmatic of a very general situation: we have a
  category $\sS$ (here $\sS=\mathbf{Top}$) and a pseudofunctor
  $\sS\op\to\cCat$ (here this pseudofunctor sends $B$ to $\bEx_B$),
  with the property that each morphism $f$ of \sS\ is sent to a
  functor $f^*$ having a left adjoint $f_!$.  For any such
  pseudofunctor, we can ask whether a given commutative square in \sS\
  satisfies the Beck-Chevalley condition; often this is the case for
  (some class of) pullback squares in \sS.
\end{eg}

\begin{eg}\label{eg:closed-monfunc}
  Let \sC\ and \sD\ be closed symmetric monoidal categories and let
  $f^*\maps \sD\to\sC$ be a lax monoidal functor; this means that we
  have natural transformations
  \begin{align}
    f^*X\ten f^*Y &\too f^*(X\ten Y)\label{eq:lax-transf}\\
    I &\too f^*I
  \end{align}
  satisfying certain axioms.  Now we can also
  regard~\eqref{eq:lax-transf} as a transformation
  \[\vcenter{\xymatrix{\sD \ar[r]^{f^*}\ar[d]_{X\ten -} \drtwocell\omit & \sC \ar[d]^{f^*X\ten -}\\
      \sD\ar[r]_{f^*} & \sC}}
  \]
  which therefore has a mate
  \begin{equation}
    \label{eq:closed-transf}
    f^*\Hom(X,Y) \too \Hom(f^*X,f^*Y).
  \end{equation}
  We say that $f^*$ is a \emph{closed monoidal functor}
  if~\eqref{eq:closed-transf} is an isomorphism.

  Now suppose that $f^*$ has a left adjoint $f_!$.  Then we also have
  composite adjunctions
  \begin{align*}
    X\ten f_!(-)&\;\adj\; f^*\Hom(X,-) \qquad\text{and}\\
    f_!(f^*X \ten -)&\;\adj\; \Hom(f^*X, f^*-).
  \end{align*}
  Under these adjunctions,~\eqref{eq:closed-transf} has a mate
  \begin{equation}
    f_!(f^*X \ten A) \too X\ten f_!A.\label{eq:closed-transf-mate}
  \end{equation}
  By \autoref{thm:hkid-mate-iso},~\eqref{eq:closed-transf-mate} is an
  isomorphism if and only if~\eqref{eq:closed-transf} is.  This
  alternate condition is sometimes easier to verify.

  There are many other mates of this sort that compare various
  composites of adjoint functors between monoidal categories; see, for
  instance,~\cite{fhm:left-and-right}.
\end{eg}

The thing to notice about both of these examples is that the given
structure uniquely specifies a canonical transformation, and the
important question is whether that transformation is an isomorphism.
Thus, for instance, in the case of the Beck-Chevalley condition, it is
important not merely that there \emph{exists} an isomorphism $f_! h^*
\iso k^* g_!$, but that \emph{the particular transformation} $f_! h^*
\to k^* g_!$ (the mate of the specified isomorphism $h^*g^*\iso
f^*k^*$) is an isomorphism.  The mere existence of an isomorphism may
be sufficient for some applications, such as computing
homology and cohomology groups up to isomorphism.  However, for other purposes, such as
proving the coherence axioms for the bicategory of parametrized
spectra constructed in~\cite{maysig:pht} (see~\cite{shulman:frbi} for
details), it is essential to know \emph{what} that isomorphism is.

We end this section with a useful observation about iterated mates.

\begin{lem}\label{lem:adj-composite-mate}
  Given a transformation
  \[\vcenter{\xymatrix@-.5pc{\sA \ar[r]^{g^*}\ar[d]_{h^*} \drtwocell\omit & \sB \ar[d]^{k^*}\\
      \sC\ar[r]_{f^*} &\sD }}
  \]
  where all the functors $f^*$, $g^*$, $h^*$, and $k^*$ have left
  adjoints $f_!$, $g_!$, $h_!$, and $k_!$ respectively, if we first
  take its mate under the adjunctions $f_!\adj f^*$ and $g_!\adj g^*$
  to obtain a transformation $f_!k^* \to h^*g_!$, and then take the
  mate of this under the adjunctions $h_!\adj h^*$ and $k_!\adj k^*$,
  the resulting transformation $h_!f_!\to g_!k_!$ is the same as the
  mate of the original transformation under the composite adjunctions
  $h_!f_!\adj f^*h^*$ and $g_!k_!\adj k^*g^*$.
\end{lem}
\begin{proof}
  By unraveling definitions.
\end{proof}

\section{Derived functors}
\label{sec:der-func}

A good deal of the power of model category theory, and of abstract
homotopy theory more generally, comes from its ability to construct
\emph{derived} structure (that is, structure at the level of homotopy
categories) from \emph{point-set level} structure, in a tractable way.
The most basic example, of course, is the construction of homotopy
categories themselves; a few other examples include the constructions
of
\begin{enumerate}
\item derived functors from point-set level functors,
\item monoidal homotopy categories from monoidal model categories,
\item enriched homotopy categories from enriched model categories, and
\item triangulated homotopy categories from stable model categories.
\end{enumerate}
Of course, structure-preserving passage from one world to another is a
common phenomenon in mathematics; to describe it formally the term
\emph{functor} was invented.  In the case of constructing derived
structure, one general functoriality statement was proven
in~\cite{hovey:modelcats}: passage to derived functors is a
\emph{pseudofunctor} from a 2-category \cModel\ of model categories to
the 2-category \cCat\ of categories, functors, and natural
transformations.

In order to make such a statement precise, we need to specify what the
morphisms are in \cModel.  However, there are really two different
types of morphism between model categories, so we end up with two
different 2-categories.  In \cModell, the morphisms are left
Quillen functors (functors which preserve cofibrations and acyclic
cofibrations and have a right adjoint), and in \cModelr, the morphisms
are right Quillen functors.  In each case, we allow arbitrary natural
transformations as the 2-cells.

For the reader's convenience, we now recall the usual definition of
derived functors.  If $f\maps \sC\to\sD$ is left Quillen, by Ken
Brown's lemma it preserves weak equivalences between cofibrant
objects, so the composite $f\circ Q\maps \sC\to\sD$ preserves all weak
equivalences (where $Q$ denotes a functorial cofibrant replacement).
Thus, $f\circ Q$ induces a functor $\bL f\maps \Ho(\sC)\to\Ho(\sD)$
which we call the \textbf{left derived functor} of $f$.  Dually, a
right Quillen functor has a \textbf{right derived functor} $\bR f$
induced by $f\circ R$ (where $R$ denotes a functorial fibrant
replacement).

\begin{rmk}\label{rmk:der-kan}
  One can show that such a left derived functor of $f$ is, in
  particular, a right Kan extension of $f$ along the localization
  $\sC\to\Ho(\sC)$, and many authors take this as a definition of
  ``derived functor''.  From our point of view it is fairly
  irrelevant, although it does imply that $\bL f$ depends only on the
  weak equivalences in \sC\ rather than the model structure.
\end{rmk}

Recall that a \emph{pseudofunctor} between 2-categories (also called a
\emph{weak 2-functor} or a \emph{homomorphism of bicategories}) is a
map which preserves composition not exactly, but only up to
\emph{constraint isomorphisms} $F(g) \circ F(f) \cong F(g\circ f)$ and
$\Id\cong F(\Id)$ (which are then required to satisfy standard
\emph{coherence axioms}).

\begin{thm}[\cite{hovey:modelcats}]
  There are pseudofunctors
  \begin{align*}
    \bL\maps \cModell &\too \cCat\\
    \bR\maps \cModelr &\too \cCat
  \end{align*}
  which take a model category \sC\ to $\Ho(\sC)$ and a left or right
  Quillen functor to its left or right derived functor, respectively.
\end{thm}
\begin{proof}
  Consider \bL; of course \bR\ is dual.  We have already defined the
  image of each model category and each left Quillen functor.  If
  $f,g\maps \sC\toto\sD$ are left Quillen and $\al\maps f\to g$ is a
  natural transformation, then the image of $\al$ under \bL\ is
  defined to be the natural transformation $\bL f\to\bL g$ whose
  components are represented by $\al_{QX}\maps fQX\to gQX$.  We refer
  to this as the \textbf{derived natural transformation} of $\al$.  This
  operation clearly preserves composites of natural transformations.
  The pseudofunctor composition constraint $\bL g\circ \bL f
  \too[\iso] \bL(g\circ f)$ is represented by the natural
  transformation
  \begin{equation}
    \xymatrix@C=3pc{g Q f Q \ar[r]^{g\pi f Q} & g f Q}\label{eq:psfr-coh}
  \end{equation}
  where $\pi\maps Q\to \Id$ is a natural weak equivalence relating $Q$
  to the identity.  Since $f$ preserves cofibrant objects, $\pi f Q$
  is a weak equivalence between cofibrant objects,
  so~\eqref{eq:psfr-coh} is also a weak equivalence and thus
  represents an isomorphism in the homotopy category.  The unit
  isomorphism $\bL (\Id) \too[\iso] \Id$ is simply represented by $\pi$
  itself, and the axioms of a pseudofunctor follow by naturality of
  $\pi$.
\end{proof}

As usual, the existence of a functor implies the automatic
preservation of any structure that can be defined in the relevant sort
of category.  In this case, that means any categorical structure that
can be ``internalized'' to any 2-category.

\begin{eg}\label{eg:der-2cat-adjn}
  An \emph{adjunction} $f\adj g$ in a 2-category consists of morphisms
  $f\maps C\to D$ and $g\maps D\to C$ and 2-cells $\eta\maps \id_C \to
  gf$ and $\ep\maps fg\to \id_D$ satisfying the usual triangle
  identities.  An adjunction in \cCat\ is just an adjunction in the
  usual sense.

  Since adjunctions are defined purely 2-categorically, they are
  preserved by any pseudofunctor.  Thus, if $f\adj g$ is an adjunction
  between model categories in which both $f$ and $g$ are left
  Quillen, then we also have $\bL f\adj \bL g$.
\end{eg}

\begin{eg}\label{eg:2cat-der-mates}
  Of greatest interest to us is that \emph{mates} can be defined
  internal to any 2-category.  The definitions are the same as those
  given in \S\ref{sec:mates}: simply replace ``functor'' by
  ``morphism'' and ``natural transformation'' by ``2-cell.''

  Since the definition is purely 2-categorical, such mates are also
  preserved by any pseudofunctor.  Thus, in any of the examples given
  in \S\ref{sec:mates}, if all the categories are model categories and
  all the functors involved are (say) left Quillen, then \emph{the mate of a
    derived natural transformation is the same as the derived natural
    transformation of a mate}.  In particular, since pseudofunctors
  take 2-cell isomorphisms to 2-cell isomorphisms, if the mate of a
  given transformation is an isomorphism, then so is the mate of its
  derived transformation.

  For example, given a square~\eqref{eq:bc-down} which satisfies the
  Beck-Chevalley condition on the point-set level, and in which the
  functors $f^*$, $g^*$, $h^*$, $k^*$, $f_!$, and $g_!$ are all left
  Quillen, it follows that the square also satisfies the
  Beck-Chevalley condition at the derived level (i.e.\ the canonical
  transformation $\bL f_! \circ \bL h^* \to \bL k^* \circ \bL g_!$ is
  an isomorphism).
\end{eg}

This is a very appealing formal setup---we have not just one but two
functors---but unfortunately it is not all that useful in practice.
It is certainly useful to know that passage to derived functors
\emph{of the same handedness} preserves composition (the pseudofunctor
constraint $\bL g\circ \bL f \iso \bL(g\circ f)$), but it turns out
that one almost never encounters adjunctions in \cModell\ or \cModelr.
Much more common are, of course, Quillen adjunctions, in which the
\emph{left} adjoint is \emph{left} Quillen and (equivalently) the
\emph{right} adjoint is \emph{right} Quillen.  It is well-known that
any Quillen adjunction $f\adj g$ has a derived adjunction $\bL f\adj
\bR g$, but since $f$ and $g$ live in different 2-categories this does
not follow from pseudofunctoriality as in \autoref{eg:der-2cat-adjn}.
However, functoriality is such a useful type of framework that it is
natural to ask whether there is some other type of ``functor''
which can serve to relate left and right derived functors.

In the rest of the paper we give an affirmative answer to this
question.  However, such an answer must move beyond
2-categories; it is impossible to have a 2-category \cK\ in which
Quillen adjunctions are internal adjunctions and which admits a
pseudofunctor $\cK\to\cCat$ combining \bL\ and \bR.  For if so, then
as in \autoref{eg:2cat-der-mates}, this would imply that any
Beck-Chevalley condition that holds on the point-set level would
remain true at the derived level.  But this is known to be
false; see, for instance,~\cite[Counterexample 0.0.1]{maysig:pht}
(which we repeat below as \autoref{thm:not-bc}).

\section{Double categories}
\label{sec:dblcats}

Roughly speaking, the problem we encountered in the previous section
is that the composite of a left Quillen functor and a right
Quillen functor need not be any sort of Quillen functor.  It turns out,
however, that there is a well-known structure which precisely allows
us to speak about 2-cells such as the $\eta$ and $\ep$ in a Quillen
adjunction, but without necessarily being able to actually ``compose''
$f$ and $g$.  This structure is called a \textbf{double category}.

Double categories are a fundamental categorical notion, like
2-categories (although historically, they have received less
attention).  As such, they can be seen from many different viewpoints
and play many different roles in mathematics (which also leads to many
variants of the definition).  Double categories were originally
introduced by Ehresmann~\cite{ehresmann:cat-str}; a good reference
with a point of view similar to ours is~\cite{ks:r2cats}.

A double category \uK\ consists of the following data.  First of all,
we have \emph{two} categories with the same set of objects (or
\emph{0-cells}).  We distinguish between the two types of morphisms
(or \emph{1-cells}) by calling one of them \emph{vertical} and one of
them \emph{horizontal}, and usually drawing them accordingly.  In
addition, there are \emph{squares} (or \emph{2-cells}) which have the
following shape:
\begin{equation}
  \xymatrix{a \ar[r]^f\ar[d]_h \drtwocell\omit{\alpha}&
  c \ar[d]^k\\
  b \ar[r]_g & d.}\label{eq:2cell}
\end{equation}
Here $a$, $b$, $c$, and $d$ are objects, $f$ and $g$ are horizontal
morphisms, and $h$ and $k$ are vertical morphisms.  We think of such
an $\alpha$ as a morphism from ``the composite $kf$'' to ``the
composite $gh$,'' even though such composites do not actually exist
(since the vertical and horizontal 1-cells live in different
categories).

Finally, we require that the 2-cells can be composed both horizontally
and vertically, forming the morphisms of a category in each direction,
and that these two category structures respect each other and the
given categories of horizontal and vertical 1-cells.  We write
$\alpha\boxbar\beta$ for the horizontal composite of 2-cells
\[\xymatrix{ \ar[r]\ar[d] \drtwocell\omit{\alpha} &
  \ar[r]\ar[d] \drtwocell\omit{\beta} &
  \ar[d]\\
  \ar[r] & \ar[r] & }\]
and $\beta\boxminus\alpha$ for the vertical composite
\[\xymatrix{ \ar[r]\ar[d] \drtwocell\omit{\alpha} &  \ar[d]\\
  \ar[r]\ar[d] \drtwocell\omit{\beta} &  \ar[d]\\
  \ar[r] & .}\]
The compatibility requirement for composition is then that
\[(\alpha\boxbar \beta)\boxminus(\gamma\boxbar \delta) =
(\alpha \boxminus \gamma)\boxbar(\beta\boxminus \delta).\]
Every object $a$ has both a vertical identity $1_a$ and a horizontal
identity $1^a$, every vertical arrow $g\maps a\to b$ has an identity
2-cell
\[\xymatrix{\ar[r]^{1^a}\ar[d]_g \drtwocell\omit{1^g} &  \ar[d]^g\\
  \ar[r]_{1^b} & }\]
every horizontal arrow $f\maps a\to c$ has an identity 2-cell
\[\xymatrix{ \ar[r]^f\ar[d]_{1_a} \drtwocell\omit{1_f} &  \ar[d]^{1_c}\\
  \ar[r]_f & }\]
and the compatibility requirements for units are that
\begin{equation*}
  1^{1_a} = 1_{1^a}, \qquad
  1^g \boxminus 1^f = 1^{gf}, \quad\text{and}\quad
  1_f \boxbar 1_g = 1_{gf}.
\end{equation*}
We will often write identity arrows simply as equalities.

\begin{rmk}
  A more concise definition of a double category is that it is an
  \emph{internal} category in \cCat\ (as contrasted with a 2-category,
  which is a category \emph{enriched} in \cCat).  This definition is
  often convenient for dealing with weak double categories (the
  double-category counterpart of bicategories, or weak 2-categories).
  However, for our purposes this approach merely muddies the water,
  since it breaks the symmetry between the horizontal and vertical
  directions.
\end{rmk}

The following examples are fundamental.

\begin{eg}
  There is a double category \uCat\ whose objects are categories,
  whose vertical and horizontal 1-cells are functors, and whose
  2-cells of the form~(\ref{eq:2cell}) are natural transformations
  $\alpha\maps kf\to gh$.
\end{eg}

\begin{eg}
  A similar double category can be constructed with any 2-category
  \cK\ replacing \cCat; we call this the double category $\uSq(\cK)$
  of \textbf{squares} in \cK.  Ehresmann, who first defined it, called
  it the double category of \emph{quintets} in \cK, since a 2-cell in
  $\uSq(\cK)$ is defined by a quintet $(f,g,h,k,\alpha)$ where
  $\alpha\maps kf\to gh$ is a 2-cell in \cK.
\end{eg}

\begin{eg}
  Any ordinary category \bC\ can be regarded as a 2-category with only
  identity 2-cells, so we thereby obtain a double category
  $\uSq(\bC)$ of commutative squares in \bC.
\end{eg}

In double categories of the form $\uSq(\cK)$, the vertical and
horizontal 1-cells are the same.  Of course, the reason for
considering double categories instead of 2-categories is that the two
can also be different.  The following example is the one in which we
are most interested.

\begin{eg}
  There is a double category \uModel\ whose objects are model
  categories, whose vertical arrows are left Quillen functors, whose
  horizontal arrows are right Quillen functors, and whose 2-cells of
  the form~(\ref{eq:2cell}) are arbitrary natural transformations
  $\alpha\maps kf\to gh$.  (The reason for these particular choices
  will become clear in \S\ref{sec:derived-functors}.)
\end{eg}

Any double category has two underlying 2-categories with the same
objects, called its \textbf{horizontal 2-category} $\cH(\uK)$ and its
\textbf{vertical 2-category} $\cV(\uK)$.  The morphisms of $\cH(\uK)$ are
the horizontal 1-cells of \uK, and its 2-cells are the squares in \uK\
of the form
\[\xymatrix{ \ar[r]\ar@{=}[d] \drtwocell\omit &  \ar@{=}[d]\\
  \ar[r] & }\]
which we call \textbf{h-globular}.  Dually, $\cV(\uK)$ is composed of
the objects, vertical 1-cells, and \textbf{v-globular} squares in \uK.

\begin{eg}
  Of course, for any \cK\ we have $\cH(\uSq(\cK))\iso \cK$ and
  $\cV(\uSq(\cK))\iso \cK$.  More interestingly, we have $\cH(\uModel)\iso
  \cModelr$ and $\cV(\uModel)\iso \cModell$.
\end{eg}

Any double category \uK\ has three \emph{opposites}, obtained by
reversing it horizontally, vertically, or both.  It also has a
\emph{transpose} $\uK\tr$ obtained by switching the vertical and
horizontal arrows.



We end this section by mentioning one further class of examples.  We
will make no real use of these in this paper, but they are worth
thinking about for purposes of comparison and intuition.

\begin{eg}\label{eg:moncat}
  There is a double category \uMonCat\ whose objects are monoidal
  categories, whose horizontal arrows are \emph{lax} monoidal
  functors, and whose vertical arrows are \emph{colax} monoidal
  functors.  A 2-cell
  \[\xymatrix{a \ar[r]^f\ar[d]_h \drtwocell\omit{\alpha}&
    c \ar[d]^k\\
    b \ar[r]_g & d}\]
  is a natural transformation $\alpha\maps kf\to gh$ such that the
  following diagrams commute:
  \[\vcenter{\xymatrix@R=1.5pc@C=1pc{ & k(fx\ten fy) \ar[dl]_{k_\ten} \ar[dr]^{k(f_\ten)}\\
      kfx \ten kfy \ar[d]_{\alpha\ten\alpha} &&
      kf(x\ten y) \ar[d]^\alpha \\
      ghx \ten ghy \ar[dr]_{g_\ten} &&
      gh(x\ten y) \ar[dl]^{g(h_\ten)}\\
      & g(hx \ten hy)}}
  \qquad\text{and}\qquad
  \vcenter{\xymatrix@R=1.5pc@C=1pc{ & k(I_c) \ar[dl]_{k_I} \ar[dr]^{k(f_I)}\\
      \quad I_d\quad \ar@{=}[d] &&
      kf(I_a) \ar[d]^\alpha \\
      \quad I_d\quad \ar[dr]_{g_I} &&
      gh(I_a) \ar[dl]^{g(h_I)}\\
      & g(I_b)}}\]

  The horizontal 2-category $\cH(\uMonCat)$ of \uMonCat\ is the
  2-category $\cMonCat_\ell$ of monoidal categories, lax monoidal
  functors, and monoidal transformations, and dually for
  $\cV(\uMonCat)$.  More generally, we have a double category \bbTAlg\
  of $T$-algebras, lax and colax $T$-morphisms, and generalized
  $T$-transformations for any 2-monad $T$.  (A \emph{2-monad} is a
  monad on a 2-category, for which we can define general notions of
  lax and colax morphisms of algebras; see~\cite{bkp:2dmonads}.)
  These double categories were apparently first considered
  in~\cite{gp:double-adjoints}.
\end{eg}

\begin{rmk}
  In \uModel\ no axioms are imposed on the 2-cells, whereas in
  \uMonCat\ there is a compatibility requirement with the structure
  of the 1-cells.  Nevertheless, there is a connection between the
  two.  One can ``algebraicize'' parts of the definition of model
  category so that ``algebraicized'' left and right Quillen functors
  become colax and lax morphisms for a 2-monad, respectively;
  see~\cite{gt:nwfs,garner:soa,riehl:nwfs-model}.
\end{rmk}

\section{Companions and conjoints}
\label{sec:compconj}

Recall that our goal in introducing double categories was to find an
abstract framework in which to express the adjointness between a left
Quillen functor and a right Quillen functor.  Inspecting the
definition of \uModel, we immediately see how to write this down.
The following terminology is due to~\cite{dpp:spans}
(in~\cite{gp:double-adjoints} it was called an \emph{orthogonal
  adjunction}).

\begin{defn}
  A \textbf{conjunction} in a double category \uK\ consists of
  a vertical 1-cell $f\maps a\to b$, a horizontal 1-cell $g\maps b\to
  a$, and 2-cells
  \[\xymatrix{a \ar@{=}[r]\ar[d]_f \drtwocell\omit{\eta} & a \ar@{=}[d]\\
    b\ar[r]_g & a} \qquad\text{and}\qquad
  \xymatrix{b \ar[r]^g\ar@{=}[d] \drtwocell\omit{\ep} & a \ar[d]^f\\
    b\ar@{=}[r] & b}
  \]
  (the \textbf{unit} and \textbf{counit}) such that $\ep\boxbar\eta =
  1_g$ and $\ep\boxminus\eta = 1^f$.  We say that $f$ is the
  \textbf{left conjoint} and $g$ is the \textbf{right conjoint}, and
  write $f\conj g$.
\end{defn}

\begin{eg}
  A conjunction in \uCat\ is simply an ordinary adjunction.
  Likewise, a conjunction in $\uSq(\cK)$ is simply an ordinary
  internal adjunction in \cK.
\end{eg}

\begin{eg}\label{eg:qadj-as-conj}
  A conjunction in \uModel\ is an adjunction in which the left adjoint
  is left Quillen and the right adjoint is right Quillen---in other
  words, a Quillen adjunction.
\end{eg}

\begin{eg}
  A conjunction in \bbTAlg\ (such as \uMonCat) is precisely a
  \emph{doctrinal adjunction} as studied in~\cite{kelly:doc-adjn}.
  This is an adjunction between $T$-algebras in which the left adjoint
  is colax and the right adjoint is lax, and the colax and lax
  structure maps are mates under the
  adjunction.
\end{eg}

Interpreting conjunctions in the horizontally-opposite double category
(or, equivalently, the vertically-opposite one), we obtain a different
useful notion.

\begin{defn}
  A \textbf{companion pair} in a double category \uK\ consists
  of a vertical 1-cell $f\maps a\to b$, a horizontal 1-cell $f'\maps
  a\to b$, and 2-cells
  \[\xymatrix{a \ar[r]^{f'}\ar[d]_f \drtwocell\omit{\ph} & b \ar@{=}[d]\\
    b\ar@{=}[r] & a} \qquad\text{and}\qquad
  \xymatrix{a \ar@{=}[r]\ar@{=}[d] \drtwocell\omit{\psi} & a \ar[d]^f\\
    a\ar[r]_{f'} & b}
  \]
  such that $\ph\boxminus\psi = 1^f$ and $\psi\boxbar\ph = 1_{f'}$.
  We say that $f'$ is the \textbf{(horizontal) companion} of $f$ and
  that $f$ is the \textbf{(vertical) companion} of $f'$, and write
  $f\comp f'$.
\end{defn}

\begin{eg}
  In \uCat\ or $\uSq(\cK)$, every 1-cell has a companion, namely
  itself; $\ph$ and $\psi$ are both identities.  More generally, a
  companion pair in $\uSq(\cK)$ is precisely a natural isomorphism
  between two parallel morphisms $f,g\maps a\toto b$ in \cK.
\end{eg}

Thus, from the double-categorical perspective, ``adjunctions are dual
to natural isomorphisms.''

\begin{eg}
  A 1-cell in \uModel\ has a companion just when it both left and
  right Quillen.
\end{eg}

\begin{eg}
  A 1-cell in \uMonCat\ has a companion just when it is a
  \emph{strong} monoidal functor.  For the 2-cells $\ph$ and $\psi$
  show that $f$ and $f'$ are isomorphic as ordinary functors, and then
  the hexagon axioms in the definition of a 2-cell in \uMonCat\ imply
  that the lax structure maps of $f'$ are inverses to the colax
  structure maps of $f$, so that both are strong.  An analogous
  statement is true in any \bbTAlg.
\end{eg}

Motivated by these examples, we say that a 1-cell in a general double
category is \textbf{strong} if it has a companion.

Companions and conjoints in a double category have most of the good
properties of adjunctions in a 2-category.  For instance, they are
unique up to unique isomorphism when they exist, and are preserved
under composition.

\begin{prop}\label{thm:comp-uniq}
  If $f'$ and $f''$ are both horizontal companions of $f$, then there
  is a canonical isomorphism $f'\iso f''$ in $\cH(\uK)$, and
  similarly for vertical companions.
\end{prop}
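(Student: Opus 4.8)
The plan is to construct the comparison 2-cell explicitly from the two companion structures and then deduce that it is invertible in $\calH\bbD$ using only the interchange law and the two companion identities. Write the given data as $f\comp f'$ via $(\ph',\psi')$ and $f\comp f''$ via $(\ph'',\psi'')$, so that $\ph'\boxminus\psi'=1^f$, $\psi'\boxbar\ph'=1_{f'}$, and likewise $\ph''\boxminus\psi''=1^f$, $\psi''\boxbar\ph''=1_{f''}$. First I would form the horizontal composites
\[ \xi \;:=\; \psi''\boxbar\ph' \qquad\text{and}\qquad \eta \;:=\; \psi'\boxbar\ph''. \]
Since $\psi''$ has right boundary $f$ and $\ph'$ has left boundary $f$, the composite $\xi$ is defined; inspecting its boundaries shows it is h-globular with horizontal source $f'$ and target $f''$, and symmetrically $\eta$ is h-globular from $f''$ to $f'$. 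Thus $\xi$ and $\eta$ are 2-cells $f'\to f''$ and $f''\to f'$ in the horizontal 2-category $\calH\bbD$.

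Next I would recall that composition of 2-cells in $\calH\bbD$ along a horizontal 1-cell is vertical pasting $\boxminus$, and that the identity 2-cell on an object $f'$ of $\calH\bbD$ is $1_{f'}$. Hence it suffices to prove the two inverse laws $\eta\boxminus\xi=1_{f'}$ and $\xi\boxminus\eta=1_{f''}$; these exhibit $\xi$ as the desired canonical isomorphism $f'\iso f''$, canonical because it is assembled entirely from the structural 2-cells.

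For the first law I would regroup the pasting $\eta\boxminus\xi=(\psi'\boxbar\ph'')\boxminus(\psi''\boxbar\ph')$ into the form
\[ \psi'\boxbar(\ph''\boxminus\psi'')\boxbar\ph'. \]
The middle block is a v-globular 2-cell on $f$, and the companion identity for $f''$ gives $\ph''\boxminus\psi''=1^f$; since $1^f$ is the unit for horizontal composition at $f$, this collapses to $\psi'\boxbar\ph'$, which equals $1_{f'}$ by the companion identity for $f'$. The second law is entirely symmetric, using $\ph'\boxminus\psi'=1^f$ and $\psi''\boxbar\ph''=1_{f''}$. The assertion about vertical companions then follows by running the same argument in the transpose double category $\bbD^\top$.

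The main obstacle is the regrouping step. The two horizontal composites $\xi$ and $\eta$ have their interior seams on opposite sides of the shared middle 1-cell $f''$ (it is the bottom of the left cell $\psi''$ in $\xi$, but the top of the right cell $\ph''$ in $\eta$), so the four cells $\psi',\ph'',\psi'',\ph'$ form a staircase rather than a $2\times 2$ grid, and the binary interchange law does not apply on the nose. To make the regrouping rigorous I would pad $\psi'$ and $\ph'$ with the vertical identity 2-cells $1_{1^a}$ and $1_{1^b}$ on the horizontal units, turning the diagram into a genuine $2\times 3$ grid to which interchange applies columnwise; alternatively one may appeal directly to the coherence (well-definedness) of pasting in a double category. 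Everything else is a routine verification of boundaries.
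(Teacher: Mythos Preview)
Your construction of $\xi=\psi''\boxbar\ph'$ is exactly the composite the paper writes down (and your $\eta$ is its ``obvious dual construction''), so the approach is the same; you simply supply the verification that the paper omits. Your identification and resolution of the interchange subtlety via padding to a $2\times 3$ grid is correct, and the reduction $\psi'\boxbar(\ph''\boxminus\psi'')\boxbar\ph'=\psi'\boxbar 1^f\boxbar\ph'=\psi'\boxbar\ph'=1_{f'}$ goes through as stated.
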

\begin{proof}
  An isomorphism is given by the following composite.
  \[\begin{array}{c}
    \xymatrix{ \ar@{=}[r]\ar@{=}[d] \drtwocell\omit &
      \ar[r]^{f'}\ar[d]^f \drtwocell\omit &
      \ar@{=}[d]\\
      \ar[r]_{f''} & \ar@{=}[r] & }
  \end{array}\]
  Its inverse is the obvious dual construction.
\end{proof}

\begin{prop}\label{thm:comp-compose}
  If $f$ and $h$ have companions $f'$ and $h'$, then $h' f'$ is a
  companion of $h f$.
\end{prop}
\begin{proof}
  It is straightforward to compose the 2-cells defining the companion
  pairs $f\comp f'$ and $h\comp h'$ to produce a companion pair $h
  f\comp h'f'$.
\end{proof}

By duality, we have the corresponding results for conjunctions.

\begin{prop}
  If $g$ and $g'$ are both conjoints of $f$, then there is a canonical
  globular isomorphism $g\iso g'$.
\end{prop}

\begin{prop}
  If $f$ and $h$ have conjoints $g$ and $k$, respectively, then $gh$
  is a conjoint of $hf$.
\end{prop}

The most important property of companions and conjunctions for our
purposes, however, is that they also have an associated mate
correspondence.  We begin with mates for companions.

\begin{prop}\label{companion-mates}
  If $f$ and $g$ have horizontal companions $f'$ and $g'$, then there is a
  canonical isomorphism
  \begin{equation}
    \cV(\uK)(f,g) \iso \cH(\uK)(f',g').\label{eq:companion-mates-i}
  \end{equation}
  More generally, for any $i,j,m,n$ there is a bijection between
  2-cells of the following shapes:
  \begin{equation}
    \begin{array}{c}
      \xymatrix{ \ar[r]^j\ar[d]_{fi} \drtwocell\omit{\alpha} &  \ar[d]^{mg}\\
        \ar[r]_n & }
    \end{array}
    \qquad\text{and}\qquad
    \begin{array}{c}
      \xymatrix{ \ar[r]^{g'j}\ar[d]_i \drtwocell\omit{\beta} &  \ar[d]^m\\
        \ar[r]_{nf'} & .}
    \end{array}\label{eq:companion-mates-ii}
  \end{equation}
  We say that a pair of 2-cells which correspond
  under~\eqref{eq:companion-mates-ii} are \textbf{mates}.
\end{prop}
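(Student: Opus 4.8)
The strategy is to build the bijection explicitly by "pasting in" the companion structure 2-cells $\ph$ and $\psi$, and then to verify that the two resulting constructions are mutually inverse using the companion equations $\ph\boxminus\psi = 1^f$ and $\psi\boxbar\ph = 1_{f'}$ (together with their analogues for $g$). Concretely, given a 2-cell $\alpha$ of the first shape (whose left vertical boundary is $fi$ and whose right vertical boundary is $mg$), I would produce $\beta$ by pasting the structure cell $\psi_f$ along the left edge and $\ph_g$ along the right edge, thereby trading the vertical occurrence of $f$ for a horizontal $f'$ on the bottom and the vertical $g$ for a horizontal $g'$ on the top. Going back, given $\beta$ of the second shape, I would paste $\ph_f$ and $\psi_g$ to recover a cell with boundaries $fi$ and $mg$. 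The special case~\eqref{eq:companion-mates-i} is then just $i=j=m=n=1$.

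I would organize the verification as follows. First I would write down the two assignments $\alpha\mapsto\beta$ and $\beta\mapsto\alpha$ as explicit pasting diagrams, being careful that the boundaries match: the companion cells $\ph$ and $\psi$ are globular in one direction, so pasting them changes only the designated edges and leaves $i,j,m,n$ untouched. Second, I would compose the two assignments in one order and use interchange (the fact that horizontal and vertical pasting commute in a double category) to regroup the four structure cells into two pairs, each of the form $\ph\boxminus\psi$ or $\psi\boxbar\ph$; by the companion equations these collapse to the identity cells $1^f$ and $1^g$ (resp.\ $1_{f'},1_{g'}$), leaving $\alpha$ (resp.\ $\beta$) unchanged. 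Third, I would note the computation for the other composite order is entirely symmetric.

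The main obstacle is purely bookkeeping: ensuring that the regrouping of the pasted 2-cells is legitimate, i.e.\ that the whiskered identities $1_i,1_j,1_m,1_n$ interchange correctly with the companion cells so that the companion equations apply verbatim rather than in some whiskered form. This is where the double-category interchange law does the real work, and it is the one place where a reader might worry the naive pasting does not literally reduce to $\ph\boxminus\psi=1^f$. I expect this to come out cleanly once the diagram is drawn, since whiskering an identity 2-cell by the companion cells is exactly the operation the companion equations are stated to handle, but it is the step that deserves a careful diagram rather than a one-line assertion. Naturality of the bijection in its boundaries then follows formally from functoriality of pasting.
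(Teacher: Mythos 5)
Your construction is exactly the paper's: the paper's proof consists precisely of the two pasting diagrams you describe (pasting $\psi_f$ and $\ph_g$ to send $\alpha\mapsto\beta$, and $\ph_f$ and $\psi_g$ to go back), with the mutual-inverse verification via interchange and the equations $\ph\boxminus\psi = 1^f$, $\psi\boxbar\ph = 1_{f'}$ left implicit. Your additional care about the regrouping step is sound and only makes explicit what the paper omits.
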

\begin{proof}
  The bijection is given by the following correspondences.
  \begin{equation}
  \begin{array}{c}
      \xymatrix{ \ar[r]^j\ar[d]_{fi} \drtwocell\omit{\alpha} &
        \ar[d]^{mg}\\
        \ar[r]_n & }
    \end{array} \quad\xymatrix{\ar@{|->}[r] & }\quad
    \begin{array}{c}
      \xymatrix{
        \ar@{=}[r] \ar[d]_i \drtwocell\omit{1^i} & \ar[r]^j\ar[d]^i \ddrtwocell\omit{\alpha}&
        \ar[d]_g \ar[r]^{g'} \drtwocell\omit{\ph} & \ar@{=}[d]\\
        \ar@{=}[d] \ar@{=}[r] \drtwocell\omit{\psi} &
        \ar[d]^f &  \ar[d]_m \ar@{=}[r] \drtwocell\omit{1^m} & \ar[d]^m \\
        \ar[r]_{f'} & \ar[r]_n & \ar@{=}[r] & }
    \end{array}\label{eq:mate-bijection-i}
  \end{equation}
  \begin{equation}
    \begin{array}{c}
    \xymatrix{
      \ar@{=}[d] \ar[r]^j \drtwocell\omit{1_j}&
      \ar@{=}[d] \ar@{=}[r] \drtwocell\omit{\psi} & \ar[d]^g\\
      \ar[r]_j\ar[d]_i \drrtwocell\omit{\beta} &
      \ar[r]_{g'} & \ar[d]^m\\
      \ar[r]^{f'} \ar[d]_f \drtwocell\omit{\ph} &
      \ar[r]^n \ar@{=}[d] \drtwocell\omit{1_n} &
      \ar@{=}[d] \\
      \ar@{=}[r] &\ar[r]_n& }
    \end{array} \quad\xymatrix{& \ar@{|->}[l] }\quad
    \begin{array}{c}
      \xymatrix{ \ar[r]^{g'j}\ar[d]_i \drtwocell\omit{\beta} &  \ar[d]^m\\
        \ar[r]_{nf'} & .}
    \end{array}
  \end{equation}
\end{proof}

The correspondence~\eqref{eq:companion-mates-i}
is preserved by composition, so we have a 2-category
$\cStr(\uK)$ whose 0-cells are the 0-cells of \uK, whose 1-cells
are companion pairs in \uK, and whose 2-cells are mate-pairs of
globular 2-cells.  We also have canonical 2-functors
\begin{align*}
  \cStr(\uK) &\too \cV(\uK)\\
  \cStr(\uK) &\too \cH(\uK)
\end{align*}
which are full and faithful on hom-categories.

\begin{egs}
  $\cStr(\uSq(\cK))$ is not quite the same as \cK; its morphisms are
  pairs of parallel morphisms in \cK\ with an isomorphism between
  them.  However, it is ``biequivalent'' to \cK\ (this is the most
  general sort of equivalence between 2-categories).

  Similarly, $\cStr(\uModel)$ is biequivalent to the 2-category of
  model categories and functors which are both left and right
  Quillen, and $\cStr(\uMonCat)$ is biequivalent to the 2-category
  of monoidal categories and strong monoidal functors.
\end{egs}

Dualizing this correspondence, we immediately obtain the mate
correspondence for conjunctions.

\begin{prop}\label{conjunction-mates}
  If $f\conj g$ and $h\conj k$ where $f,h\maps a\to b$, then we have a
  natural isomorphism
  \begin{equation}
    \cV(\uK)(f,h) \iso \cH(\uK)(k,g),\label{eq:conjoint-mates-i}
  \end{equation}
  under which isomorphisms $f\iso h$ correspond to isomorphisms $k\iso
  g$.  More generally, for any $i,j,m,n$ there is a bijection between
  2-cells of the following shapes:
  \begin{equation}
    \begin{array}{c}
      \xymatrix{ \ar[r]^i\ar[d]_{mh} \drtwocell\omit & \ar[d]^{fj}\\
        \ar[r]_{n} & }
    \end{array}
    \qquad\text{and}\qquad
    \begin{array}{c}
      \xymatrix{ \ar[r]^{ik}\ar[d]_m \drtwocell\omit &  \ar[d]^j\\
        \ar[r]_{gn} & }
    \end{array}.\label{eq:conjoint-mates-ii}
  \end{equation}
  We say that a pair of 2-cells which correspond
  under~\eqref{eq:conjoint-mates-ii} are \textbf{mates}.
\end{prop}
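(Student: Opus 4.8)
The plan is to mimic the proof of \autoref{companion-mates}, with the companion 2-cells $\ph,\psi$ replaced by the units and counits of the two conjunctions, and with one direction reversed to account for the contravariance of conjoints. Write $\eta_f,\ep_f$ for the unit and counit of $f\conj g$, and $\eta_h,\ep_h$ for those of $h\conj k$.

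First I would construct the forward map, sending a 2-cell $\alpha$ with boundary (top $i$, left $mh$, right $fj$, bottom $n$) to a 2-cell $\beta$ with boundary (top $ik$, left $m$, right $j$, bottom $gn$), by a staircase pasting dual to~\eqref{eq:mate-bijection-i}. Concretely, paste the counit $\ep_h$ to the upper left of $\alpha$, so that its right edge $h$ cancels the $h$ in the left boundary of $\alpha$ while its top edge $k$ supplies the new part of the top boundary of $\beta$; and paste the unit $\eta_f$ to the lower right of $\alpha$, so that its left edge $f$ cancels the $f$ in the right boundary of $\alpha$ while its bottom edge $g$ supplies the new part of the bottom boundary of $\beta$. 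Filling the remaining slots with identity 2-cells and reading off the outer boundary yields exactly the boundary of $\beta$.

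Next I would define the reverse map dually, using the two remaining structure cells: paste the unit $\eta_h$ (whose bottom edge $k$ meets the top of $\beta$ and whose left edge $h$ restores the missing $h$) and the counit $\ep_f$ (whose top edge $g$ meets the bottom of $\beta$ and whose right edge $f$ restores the missing $f$). That these assignments are mutually inverse is exactly where the triangle identities enter: composing forward then backward produces on the $h$-side a vertical composite of $\ep_h$ and $\eta_h$ that collapses to $1^h$ by the triangle identity $\ep_h\boxminus\eta_h = 1^h$, and likewise on the $f$-side by $\ep_f\boxminus\eta_f = 1^f$; the interchange law then slides these collapses past $\alpha$ to recover it, and the reverse round trip is handled the same way using the horizontal triangle identities $\ep\boxbar\eta = 1_g$ and $\ep_h\boxbar\eta_h=1_k$. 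Specializing to $i=1^a$, $j=1_a$, $m=1_b$, $n=1^b$ turns $\alpha$ into a v-globular 2-cell $f\to h$ and $\beta$ into an h-globular 2-cell $k\to g$, which gives the isomorphism~\eqref{eq:conjoint-mates-i}.

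Finally, I would check that the bijection is compatible with horizontal and vertical composition of 2-cells---again a routine pasting argument using interchange---so that identities go to identities and composites to composites; in particular an invertible 2-cell $f\iso h$ in $\calV\bbD$ is sent to an invertible 2-cell $k\iso g$ in $\calH\bbD$, as claimed. The main obstacle is purely organizational: arranging the staircase and the identity-cell bookkeeping so that the four boundary pieces line up, and correctly tracking the direction reversal (which of $\eta,\ep$ belongs in which corner, and for which of the two conjunctions). Once the diagram dual to~\eqref{eq:mate-bijection-i} has been drawn, every verification reduces to one triangle identity together with interchange, exactly as in the companion case.
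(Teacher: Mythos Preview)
Your proposal is correct and is precisely the approach the paper intends: the paper gives no separate argument for \autoref{conjunction-mates}, but simply states that it is ``proven in the same way as the corresponding results for companions,'' and your staircase pasting with $\ep_h$ at the upper left and $\eta_f$ at the lower right (and the dual for the inverse) is exactly the required dualization of the proof of \autoref{companion-mates}. The only addition you make beyond what the paper sketches is the explicit observation that compatibility with composition forces isomorphisms to correspond to isomorphisms, which is routine and correct.
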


As with companions, we obtain a 2-category $\cConj(\uK)$ whose
objects are those of \uK, whose 1-cells are the conjunctions in \uK,
and whose 2-cells are the mate-pairs of globular 2-cells in \uK.
Note, though, that to define $\cConj(\uK)$ we must choose whether to consider
a conjunction as pointing in the direction of the left conjoint or the
right conjoint; it is precisely this arbitrariness which the
double-categorical context avoids.

\begin{egs}\label{eg:2cats-of-conj}
  Of course, $\cConj(\uCat)$ is the usual 2-category of categories and
  adjunctions, while $\cConj(\uModel)$ is the usual 2-category of
  model categories and Quillen adjunctions.
\end{egs}

\begin{rmk}
  Mates for conjunctions are clearly analogous to mates for
  adjunctions in a 2-category.  The mate correspondence for
  companion pairs also has an analogue in a 2-category, though it is
  too obvious to require comment (or a name): it simply says that if
  $f\iso f'$ and $g\iso g'$, then there is a bijection between 2-cells
  $f\to g$ and $f'\to g'$.
\end{rmk}

As an immediate application of mates, we show that companion pairs
``mediate'' between adjunctions and conjunctions.

\begin{prop}\label{compconj-mediate}
  Let $f\maps a\to b$ be a vertical 1-cell in \uK\ and let $f'\maps
  a\to b$ and $g\maps b\to a$ be horizontal 1-cells.  Then any two of
  the following statements imply the third.
  \begin{enumerate}
  \item $f'$ is a horizontal companion of $f$.\label{item:docadj-companion}
  \item $g$ is a right conjoint of $f$.\label{item:docadj-conjoint}
  \item $g$ is a right adjoint of $f'$ in $\cH(\uK)$.\label{item:docadj-adjoint}
  \end{enumerate}
  More precisely, any companion pair $f\comp f'$ and conjunction
  $f\conj g$ determine a unique horizontal adjunction $f'\adj g$, and
  similarly in the other cases.
\end{prop}
\begin{proof}
  Assuming~\ref{item:docadj-companion}, the correspondence of
  \autoref{companion-mates} transforms a unit and counit for a
  conjunction $f \conj g$ into a unit and counit for a horizontal
  adjunction $f'\adj g$, and vice versa.  The other cases are similar.
\end{proof}

\begin{rmk}
  In \bbTAlg, this implies part of one of the main results
  of~\cite{kelly:doc-adjn}: in a doctrinal adjunction, the left
  adjoint is a strong $T$-morphism precisely when the adjunction is an
  adjunction in the 2-category $\cTAlg_\ell$.
\end{rmk}

\begin{rmk}
  We believe that the notions of companion pair and conjunction are as
  central to the theory of double categories as the notions of
  equivalence and adjunction are to the theory of 2-categories.  It is
  thus surprising that they seem only recently to have been isolated
  in the present form.
  The basic ideas, however, have been around a long time.  For
  instance, a \emph{folding} or \emph{connection pair} on a double
  category, as considered
  in~\cite{bs:dblgpd-xedmod,bm:dbl-thin-conn,fiore:pscat}, can be
  defined as a strictly functorial choice of a companion for each
  vertical 1-cell.  Since companions are always pseudofunctorial
  (\autoref{thm:comp-compose}), an arbitrary choice of companions for
  each vertical 1-cell is the same as a \emph{pseudo-folding} in the
  sense of~\cite{fiore:pscat}.  The \emph{framed bicategories}
  of~\cite{shulman:frbi} are (pseudo) double categories in which every
  vertical 1-cell has both a horizontal companion and a right
  conjoint.
\end{rmk}

\section{Double pseudofunctors}
\label{sec:double-psfrs}

The way forward should now be clear: we aim to show that passage to
homotopy categories and derived functors is a functor
$\uModel\to\uCat$.  However, as in the 2-categorical case, it can only
be expected to be a \emph{pseudofunctor}, i.e.\ to preserve
composition and identities up to coherent isomorphism.  Strict
functors of double categories are easy to define, and functors which
are pseudo in one direction and strict in the other also appear in the
literature under the name of \emph{pseudo double functor} (see, for
instance,~\cite{gp:double-limits}), but we require functors which are
pseudo in both directions.  We now define these precisely under the
name of \emph{double pseudofunctors}; the reader who is uninterested
in the details may skim this section.

\begin{defn}\label{defn:dbl-psfr}
  Let \uK\ and \uL\ be double categories.  A \textbf{double
    pseudofunctor} $F\maps \uK\to\uL$ consists of the following
  structure and properties.
  \begin{enumerate}
  \item Functions from the objects, vertical 1-cells, horizontal
    1-cells, and 2-cells of \uK\ to those of \uL, preserving
    sources, targets, and boundaries.
  \item For each object $a$ of \uK, 2-cells
    \[
    \begin{array}{c}
      \xymatrix{Fa \ar[r]^{F(1^a)}\ar@{=}[d]_{1_{Fa}}
        \drtwocell\omit{F_a} &
        Fa \ar@{=}[d]^{1_{Fa}}\\
        Fa \ar@{=}[r]_{1^{Fa}} & Fa}
    \end{array}
    \qquad\text{and}\qquad
    \begin{array}{c}
      \xymatrix{Fa \ar@{=}[r]^{1^{Fa}}\ar@{=}[d]_{1_{Fa}}
        \drtwocell\omit{F^a} &
        Fa \ar[d]^{F(1_{a})}\\
        Fa \ar@{=}[r]_{1^{Fa}} & Fa}
    \end{array}
    \]
    in \uL, of which the first is an h-globular isomorphism and the
    second a v-globular isomorphism.
  \item For each composable pair $a\too[f] b\too[g] c$ of vertical
    1-cells in \uK, a v-globular isomorphism
    \[\xymatrix{Fa \ar@{=}[r]\ar[d]_{Ff} \ddrtwocell\omit{\;F^{gf}} &
      Fa \ar[dd]^{F(gf)}\\
      Fb \ar[d]_{Fg} & \\
      Fc \ar@{=}[r] & Fc.}\]
  \item For each composable pair $a\too[h] b\too[k] c$ of horizontal
    1-cells in \uK, an h-globular isomorphism 
    \[\xymatrix{Fa \ar[rr]^{F(kh)}\ar@{=}[d]
      \drrtwocell\omit{\quad F_{kh}} &&
      Fc \ar@{=}[d]\\
      Fa \ar[r]_{Fh} & Fb \ar[r]_{Fk} & Fc.}\]
  \item The following coherence axioms hold (the usual coherence
    axioms for a pseudofunctor in both directions).\label{item:psfr-ax}
    \begin{align*}
      F^{h(gf)} \boxbar \left(1^{Fh} \boxminus F^{gf}\right)
      &= F^{(hg)f} \boxbar \left(F^{hg} \boxminus 1^{Ff}\right)\\
      F^b \boxminus 1^{Ff} &= F^{1_b f}\\
      1^{Ff} \boxminus F^a &= F^{f 1_a}\\
      F_{h(gf)} \boxminus \left(F_{gf} \boxbar 1_{Fh}\right)
      &= F_{(hg)f} \boxminus \left(1_{Ff} \boxbar F_{hg}\right)\\
      F_a \boxbar 1_{Ff} &= F_{f 1_a}\\
      1_{Ff} \boxbar F_b &= F_{1_b f}.
    \end{align*}
  \item The ``double naturality'' axioms displayed in figure~\ref{fig:dblnat} hold,
    \begin{figure}
    \begin{align}\label{eq:dblnat-1}
      \begin{array}{c}
        \xymatrix{
          \ar[rr]^{F(gf)} \ar@{=}[d] \drrtwocell\omit{\quad F_{gf}} && \ar@{=}[d]\\
          \ar[d]_{Fu}\ar[r]\drtwocell\omit{F\alpha} &
          \ar[d]|{Fv}\ar[r]\drtwocell\omit{F\beta} & \ar[d]^{Fw}\\
          \ar[r]_{Fh} & \ar[r]_{Fk} &}
      \end{array}
      &=
      \begin{array}{c}
        \xymatrix{
          \ar[rr]^{F(gf)}\ar[dd]_{Fu}
          \ar@{}[ddrr]_(.3){}="d1" \ar@{=>}"d1"+/^ 0.5pc/;"d1"+/_ 0.5pc/ ^{F(\alpha\boxbar\beta)}
          &&
          \ar[dd]^{Fw}\\
          & \\
          \ar[rr] \ar@{=}[d] \drrtwocell\omit{\quad F_{kh}} && \ar@{=}[d]\\
          \ar[r]_{Fh} & \ar[r]_{Fk} &}
      \end{array}\\
      \label{eq:dblnat-2}
      \begin{array}{c}
        \xymatrix{
          Fa \ar@{=}[r]\ar@{=}[d]
          \ar@{}[dr]_(.4){}="d1" \ar@{=>}"d1"+/^ 0.5pc/;"d1"+/_ 0.5pc/ ^{F_a\inv}
          & Fa \ar@{=}[d]\\
          Fa \ar[r]\ar[d]_{Ff}
          \ar@{}[dr]_(.3){}="d2" \ar@{=>}"d2"+/^ 0.5pc/;"d2"+/_ 0.5pc/ ^{F(1^f)}
          & Fa \ar[d]^{Ff}\\
          Fa \ar[r]\ar@{=}[d]
          \ar@{}[dr]_{}="d3" \ar@{=>}"d3"+/^ 0.5pc/;"d3"+/_ 0.5pc/ ^{F_a}
          & Fa \ar@{=}[d]\\
          Fa \ar@{=}[r] & Fa}
      \end{array}
      &=
      \begin{array}{c}
        \xymatrix{Fa \ar@{=}[r]\ar[d]_{Ff}
          \ar@{}[dr]_(.35){}="d2" \ar@{=>}"d2"+/^ 0.5pc/;"d2"+/_ 0.5pc/ ^{1^{Ff}} &
          Fa\ar[d]^{Ff}\\
          Fa\ar@{=}[r] & Fa}
      \end{array}
    \end{align}
    \caption{The horizontal double naturality axioms}
    \label{fig:dblnat}
  \end{figure}
    as do their transposes involving $F^{gf}$ and $F^a$.
  \end{enumerate}
\end{defn}

Note that in general, a double pseudofunctor does not preserve
globularity of 2-cells, since it does not preserve either vertical or
horizontal identities strictly.  However, any h-globular 2-cell
\[\xymatrix{a \ar[r]^f\ar@{=}[d] \drtwocell\omit{\alpha} & b\ar@{=}[d]\\
  a\ar[r]_g & b}\]
in \uK\ gives rise to a canonical h-globular 2-cell
\[\xymatrix{Fa\ar@{=}[d] \ar@{=}[r]
  \ar@{}[dr]_(.4){}="a" \ar@{=>}"a"+/^ 0.5pc/;"a"+/_ 0.5pc/ ^{F^a} &
  Fa\ar[r]^{Ff}\ar[d]
  \ar@{}[dr]_(.4){}="c" \ar@{=>}"c"+/^ 0.5pc/;"c"+/_ 0.5pc/ ^{F(\alpha)} &
  Fb\ar[d] \ar@{=}[r]
  \ar@{}[dr]_(.3){}="b" \ar@{=>}"b"+/ur 0.5pc/;"b"+/dl 0.5pc/ ^{(F^a)\inv} & Fb\ar@{=}[d]\\
  Fa\ar@{=}[r] & Fa\ar[r]_{Fg} & Fb\ar@{=}[r] & Fb}\]
in \uL, which we denote $\cH F (\alpha)$.  It is easy to check that
this defines an ordinary pseudofunctor $\cH F\maps
\cH\uK\to\cH\uL$.  Similarly, we have $\cV F\maps
\cV\uK\to\cV\uL$.

\begin{egs}
  An ordinary pseudofunctor $F\maps \cK\to\cL$ gives rise to a
  double pseudofunctor $\uSq(F)\maps
  \uSq(\cK)\to\uSq(\cL)$ in a fairly straightforward way.
  The only wrinkle is that if $\alpha\maps kf\to gh$ is a 2-cell in
  $\uSq(\cK)$, we must compose $F\alpha$ with the constraints of
  $F$ on either side to obtain a 2-cell in $\uSq(\cL)$.

  In particular, if $F\maps \cCat\to\cCat$ is a pseudofunctor, we
  obtain a double pseudofunctor $\uSq(F)\maps \uCat\to\uCat$, and some
  of the double pseudofunctors obtained in this way also give
  endofunctors of \uModel.  For instance, there is a double
  pseudofunctor $\uModel\to\uModel$ which takes a model category \sC\
  to its pointed variant $\sC_*$; see~\cite[1.1.8,
  1.3.5]{hovey:modelcats}.
\end{egs}

\begin{eg}
  Recall that $\uK\tr$ denotes the transpose of a double category, in
  which the vertical and horizontal arrows are interchanged.  We then
  have a double pseudofunctor $\uModel\tr\to\uModel$ which takes
  \sC\ to $\sC\op$.
\end{eg}

\begin{eg}
  Recall that for any ordinary category \bC\ we have a double category
  $\uSq(\bC)$ of commutative squares in \bC.  If we restrict the
  squares in $\uSq(\bC)$ to a subclass $\cA$ of commutative squares
  which are closed under composition and identities (such as all
  pullback squares), we obtain a smaller double category
  $\uSq(\bC;\cA)$.  Then for any 2-category \cK, a double
  pseudofunctor $\uSq(\bC;\cA)\to \uSq(\cK)$ is essentially the same
  as a \emph{lower e-functor} relative to \cA, in the sense
  of~\cite[\S4.1]{dv:cross-functors}.  \emph{Upper e-functors} and
  \emph{e$^*$ and e!\ contradirectional functors} are defined by
  applying appropriate types of duality to $\uSq(\cK)$.  Finally, a
  \emph{cross functor} is a double pseudofunctor $\uSq(\bC;\cA)\to
  \uCrs(\cK)$, where $\uCrs(\cK)$ is the double category defined as
  follows:
  \begin{blist}
  \item Its objects are the objects of \cK.
  \item Its horizontal 1-cells $A\to B$ are adjunctions $f^*\maps B
    \toot A \spam f_*$ in \cK, where $f_*$ is the right
    adjoint.
  \item Its vertical 1-cells $A\to B$ are adjunctions $f_!\maps A
    \rightleftarrows B \spam f^!$ in \cK, where $f_!$ is the left
    adjoint.
  \item Its 2-cells
    \[\vcenter{\xymatrix@R=3pc@C=3pc{A \ar@<-1mm>[r]_{f_*} \ar@<1mm>[d]^{h_!} \drtwocell\omit &
        B \ar@<-1mm>[l]_{f^*} \ar@<1mm>[d]^{k_!}\\
        C\ar@<-1mm>[r]_{g_*} \ar@<1mm>[u]^{h^!} &
        D \ar@<-1mm>[l]_{g^*} \ar@<1mm>[u]^{k^!}}}
    \]
    are isomorphisms $h_! f^* \iso g^* k_!$.  Any such isomorphism has
    mates $f^*k^! \to h^! g^*$, $k^!g_* \iso f_* h^!$, and $k_!  f_*
    \to g_* h_!$; thus a cross functor has underlying functors of all
    four sorts considered above.
  \end{blist}
\end{eg}

The composite $G\circ F$ of two double pseudofunctors is defined in
an obvious way, with one minor wrinkle: since $G$ need not preserve
the globularity of the constraints for $F$, we need to compose with
the unit constraints of $G$ when defining the constraints of $GF$.
For example, the composition constraint of $GF$ is given by the composite
\[\xymatrix{
  \ar@{=}[r] \ar@{=}[d]
  \ar@{}[dr]_(.3){}="c" \ar@{=>}"c"+/ur 0.5pc/;"c"+/dl 0.5pc/ ^{G^{Fa}} &
  \ar[rr]^{GF(gf)}\ar[d]
  \ar@{}[drr]_(.4){}="x" \ar@{=>}"x"+/ur 0.5pc/;"x"+/dl 0.5pc/ ^{G(F_{gf})}
  && \ar[d]\ar@{=}[r]
  \ar@{}[dr]_(.3){}="d" \ar@{=>}"d"+/ur 0.5pc/;"d"+/dl 0.5pc/ ^{G^{Fc}}&
  \ar@{=}[d] \\
  \ar@{=}[r] \ar@{=}[d]
  \ar@{}[drrrr]_(.4){}="b" \ar@{=>}"b"+/ur 0.5pc/;"b"+/dl 0.5pc/ ^{G_{(Fg)(Ff)}}
  & \ar[rr]  &&
  \ar@{=}[r] & \ar@{=}[d] \\
  \ar[rr]_{GFf} && \ar[rr]_{GFg} & &.}\]
We thereby obtain a category \fDbl\ of double categories and double
pseudofunctors.  The operations \cV\ and \cH\ define functors from
\fDbl\ to the category \ftwoCat\ of 2-categories and pseudofunctors.
In the other direction, \uSq\ defines a functor from \ftwoCat\ to
\fDbl.


The most important observation about double pseudofunctors for our
purposes is that they preserve companions, conjoints, and mates.

\begin{prop}\label{thm:comp-dblpsfr}
  If $f$ has a horizontal companion $f'$ in \uK\ and $F\maps
  \uK\to\uL$ is a double pseudofunctor, then $F(f)$ has a horizontal
  companion $F(f')$.
\end{prop}
\begin{proof}
  We take the defining 2-cells to be
  \[\vcenter{\xymatrix{\ar[r]^{Ff'}\ar[d]_{Ff}
      \ar@{}[dr]_(.4){}="a" \ar@{=>}"a"+/^ 0.5pc/;"a"+/_ 0.5pc/ ^{F\ph} &
      \ar[d] \ar@{=}[r] \drtwocell\omit{\iso} &
      \ar@{=}[d]\\
      \ar@{=}[d]\ar[r] \drtwocell\omit{\iso} &
      \ar@{=}[r] \ar@{=}[d] \ar@{}[dr]|= & \ar@{=}[d] \\
      \ar@{=}[r]_{\phantom{Ff}} & \ar@{=}[r] &}}
  \qquad\text{and}\qquad
  \vcenter{\xymatrix{
      \ar@{}[dr]|= \ar@{=}[d]  \ar@{=}[r] &
      \ar@{=}[r]^{\phantom{Ff}} \ar@{=}[d]\drtwocell\omit{\iso} & \ar@{=}[d]\\
      \ar@{=}[r] \ar@{=}[d] \drtwocell\omit{\iso} &
      \ar[r]\ar[d]
      \ar@{}[dr]_(.4){}="b" \ar@{=>}"b"+/^ 0.5pc/;"b"+/_ 0.5pc/ ^{F\psi} &
      \ar[d]^{Ff}\\
      \ar@{=}[r] & \ar[r]_{Ff'} & }}.
  \]
  Verification of the equations defining a companion pair is
  straightforward using the double naturality axioms
  (Figure~\ref{fig:dblnat}).
\end{proof}

\begin{prop}\label{thm:conj-dblpsfr}
  If $f\conj g$ in \uK\ and $F\maps \uK\to\uL$ is a double
  pseudofunctor, then $F(f)\conj F(g)$.
\end{prop}
\begin{proof}
  By duality.
\end{proof}

\begin{prop}\label{thm:mate-dblpsfr}
  The mate correspondences~\eqref{eq:companion-mates-ii}
  and~\eqref{eq:conjoint-mates-ii} are preserved by double
  pseudofunctors.
\end{prop}
\begin{proof}
  Straightforward verification, again using the double naturality
  axioms and the horizontal and vertical pseudofunctor axioms.
\end{proof}

In particular, this means that we have two additional functors
$\cStr\maps \fDbl \to\ftwoCat$ and $\cConj\maps \fDbl \to\ftwoCat$.
It is of interest to note that \cStr\ is right adjoint to \uSq.

\begin{rmk}
  Double pseudofunctors, as we have defined them, do not seem to
  appear in the literature on double categories.  They can, however,
  be shown to be equivalent to the morphisms of the tricategory
  ${\underline{\underline{\underline{\mathcal{H}\mathit{oriz}}}}}{}_{SH}$
  defined in~\cite[\S1.4]{verity:base-change}, after first identifying
  double categories with a certain strict subclass of the \emph{double
    bicategories} which form the objects of
  ${\underline{\underline{\underline{\mathcal{H}\mathit{oriz}}}}}{}_{SH}$.
\end{rmk}

\section{The double pseudofunctor Ho}
\label{sec:derived-functors}

We are now finally ready to construct the double pseudofunctor
$\Ho\maps \uModel\to\uCat$.  We already know that it should take a
model category to its homotopy category, a left Quillen functor to
its left derived functor, and a right Quillen functor to its right
derived functor, so it remains only to define its action on a 2-cell
\[\xymatrix{\sA \ar[r]^f\ar[d]_h \drtwocell\omit{\alpha}&
  \sC \ar[d]^k\\
  \sB \ar[r]_g & \sD}\]
in \uModel.   We define the derived transformation of such an $\alpha$
to be the transformation
\[\xymatrix{\Ho(\sA) \ar[rr]^{\bR f}\ar[d]_{\bL h}
  \ar@{}[drr]_{}="d1" \ar@{=>}^{\Ho(\alpha)} "d1"+/^ 0.5pc/;"d1"+/_ 0.5pc/
  &&
  \Ho(\sC) \ar[d]^{\bL k}\\
  \Ho(\sB) \ar[rr]_{\bR g} && \Ho(\sD)}\]
represented by the composite of the following zigzag:
\begin{equation}
  kQfR \leftwe kQfQR \too kfQR \too[\alpha] ghQR \too gRhQR \leftwe gRhQ\label{eq:dernat-composite}
\end{equation}
in $\Ho(\sD)$.  Note that the backwards maps are weak equivalences,
hence represent isomorphisms in $\Ho(\sD)$, so this makes sense.
(Recall our convention that the functor $Q$ preserves fibrant
objects.)

\begin{rmk}\label{thm:dernat-simple}
  We can express this more simply as follows.  Assume that $X\in\sC$
  is both cofibrant and fibrant.  Then we have a commutative diagram
  \[\vcenter{\xymatrix@C=1.7pc{
      &
      kQfQRX \ar[r] &
      kfQRX \ar[r]^\alpha &
      ghQRX \ar[r] &
      gRhQRX \ar@{<-}[dr]^-{\sim}\\
      kQfRX \ar@{<-}[ur]^-{\sim} &
      kQfQX \ar[u]_\sim \ar[r] &
      kfQX \ar[r]^\alpha  \ar[u]_\sim &
      ghQX \ar[rr] \ar[u]^\sim &&
      gRhQX \ar[ul]_\sim  & \\
      & kQfX  \ar@{<-}[u]_-{\sim}  \ar[ul]^\sim_{\circledast} \ar[r] &
      kfX \ar[r]^\alpha \ar@{<-}[u]_-{\sim} &
      ghX \ar[r] \ar@{<-}[u]_-{\sim} &
      gRhX \ar@{<-}[ur]_\sim^{\circledast}
    }}
  \]
  in which the zigzag along the top represents $\Ho(\alpha)$ as
  defined above.  The two weak equivalences marked $\circledast$
  represent the canonical isomorphisms $\bR f (X) \iso fX$ and $\bL h
  (X) \iso hX$ when $X$ is fibrant and cofibrant, so this diagram
  shows that modulo these isomorphisms, $\Ho(\alpha)_X$ is
  represented by
  \begin{equation}
    kQ f X \too kf X \too[\alpha] gh X \too gR h X.\label{eq:dernat-composite-short}
  \end{equation}
  This suffices to determine $\Ho(\alpha)$, since every object is
  isomorphic in $\Ho(\sC)$ to a cofibrant and fibrant one.
\end{rmk}

\begin{rmk}
  We could equally well choose to represent $\Ho(\alpha)$ by the
  composite
  \begin{equation}
    kQfR \leftwe kQfRQ \too kfRQ \too[\alpha] ghRQ \too gRhRQ \leftwe
    gRhQ,\label{eq:dernat-composite-alt} 
  \end{equation}
  where we use instead the assumption that $R$ preserves cofibrant
  objects to conclude that the backwards maps are weak equivalences.
  A diagram chase shows that~\eqref{eq:dernat-composite}
  and~\eqref{eq:dernat-composite-alt} represent the same morphism in
  $\Ho(\sD)$.
\end{rmk}

\begin{thm}\label{thm:htpy-dbl-psfr}
  The above constructions define a double pseudofunctor
  \begin{equation*}
    \Ho\maps \uModel \too \uCat
  \end{equation*}
\end{thm}
\begin{proof}
  We take the constraint 2-cells to be those of the pseudofunctors
  \bL\ and \bR\ defined in \S\ref{sec:der-func}.  The ordinary
  pseudofunctor coherence axioms
  (\autoref{defn:dbl-psfr}\ref{item:psfr-ax}) follow from naturality
  of fibrant and cofibrant replacement, just as for the ordinary
  pseudofunctors \bL\ and \bR.

  Proving the double-naturality axioms is basically an exercise in
  filling up big diagrams with lots of naturality squares, though we
  do have to take care that enough ``interior'' arrows are weak
  equivalences that the diagram can be chased in the homotopy
  category.  The diagrams for~\eqref{eq:dblnat-1}
  and~\eqref{eq:dblnat-2} are shown in Figures~\ref{fig:dblnat-1}
  and~\ref{fig:dblnat-2}, respectively.  In both cases the source and
  target of the zigzags in question are placed in boxes to be easily
  visible, and all the quadrilaterals are naturality squares.  In
  Figure~\ref{fig:dblnat-1} the two zigzags go around the top-right
  and the bottom-left, and the marked arrows are weak equivalences.
  In Figure~\ref{fig:dblnat-2} the two zigzags go across the top and
  the bottom, and \emph{all} the arrows are weak equivalences.  The
  zigzag along the bottom of Figure~\ref{fig:dblnat-2} represents the
  identity since a backwards-pointing arrow represents the inverse of
  its forward-pointing version.
\end{proof}
\suppressfloats[t]
  \begin{figure}[t]
    \[\vcenter{\xymatrix@-.5pc{
        wQgRfR & wQgQRfR \ar[l]_\sim \ar[r] &
        wgQRfR \ar[r]^\beta &
        kvQRfR \ar[r] &
        kRvQRfR \\
        \framebox{$wQgfR$} \ar[u]^\sim &
        wQgQfR \ar[l]_\sim \ar[r] \ar[u]_\sim &
        wgQfR \ar[u] \ar[r]^\beta &
        kvQfR \ar[u] \ar[r] &
        kRvQfR \ar[u]_\sim \\
        &
        wQgQfQR \ar[dl]^\sim \ar[r] \ar[u]^\sim &
        wgQfQR \ar[u] \ar[d] \ar[r]^\beta &
        kvQfQR \ar[u] \ar[d] \ar[r] &
        kRvQfQR \ar[u]_\sim \ar[d] \\
        wQgfQR \ar[uu]^\sim \ar[rr] &&
        wgfQR \ar[r]^\beta &
        kvfQR \ar[r] \ar[d]_\alpha &
        kRvfQR \ar[d]^\alpha \\
        &&& khuQR \ar[r] \ar[d] &
        kRhuQR \ar[d] \\
        &&& khRuQR \ar[r]^-\sim &
        kRhRuQR\\
        &&& khRuQ \ar[u]^\sim \ar[r]_-\sim&
        \framebox{$kRhRuQ$}\ar[u]_\sim
      }}
    \]
    \caption{Proof of~\eqref{eq:dblnat-1} for the homotopy double pseudofunctor}
    \label{fig:dblnat-1}
  \end{figure}
  \begin{figure}[t]
    \[\vcenter{\xymatrix@C=3pc@R=1pc{
        &
        fQQR \ar[dl]_{fQ\pi R} \ar[r]^{f\pi QR} &
        fQR \ar[r]^{\rho fQR} &
        RfQR \\
        fQR &&&& RfQ \ar[ul]_{RfQ\rho} \\
        \framebox{$fQ$} \ar[u]^{fQ\rho} &&
        fQQ \ar[uul]^(.6){fQQ\rho} \ar[ll]^{fQ\pi} \ar[rr]_{f\pi Q} &&
        \framebox{$fQ$} \ar[uull]^(.6){fQ\rho} \ar[u]_{\rho fQ} \\
        && fQQQ \ar[dl]_{fQ\pi Q} \ar[dr]^{f\pi QQ} \ar[u]_{fQQ\pi} \\
        &fQQ \ar[luu]^{fQ\pi} \ar[dr]_{f\pi Q} &&
        fQQ \ar[uur]_{fQ\pi} \ar[dl]^{f \pi Q} \\
        && fQ
      }}
    \]
    \caption{Proof of~\eqref{eq:dblnat-2} for the homotopy double pseudofunctor}
    \label{fig:dblnat-2}
  \end{figure}

We can now fulfill our promise to exhibit the preservation of
adjunctions as a functoriality statement.

\begin{cor}
  If $f\maps \sC\toot\sD\spam g$ is a Quillen adjunction, then we have
  a derived adjunction $\bL f\maps \Ho(\sC)\toot\Ho(\sD)\spam \bR g$.
\end{cor}
\begin{proof}
  This follows from \autoref{thm:htpy-dbl-psfr} and
  \autoref{thm:conj-dblpsfr}.
\end{proof}

In fact, applying the functor $\cConj$ to the morphism $\Ho$ in \fDbl,
we obtain the ordinary pseudofunctor defined
in~\cite[1.4.3]{hovey:modelcats}.
\[\cConj(\Ho)\maps \cConj(\uModel)\too\cConj(\uCat).
\]

Considering companion pairs instead, we obtain the following dual
result, which is also occasionally useful:

\begin{cor}
  If $f\maps \sC\to\sD$ is both left and right Quillen (with respect
  to the same model structures), then $\bL f\iso \bR f$.\qed
\end{cor}

The full power of \autoref{thm:htpy-dbl-psfr}, however, lies in the
fact that double pseudofunctors also preserve mates
(\autoref{thm:mate-dblpsfr}).  In
\S\S\ref{sec:beck-chev-cond}--\ref{sec:projfmla-ex} we will see how to
apply this fact to compare composites of left and right derived
functors.

\begin{rmk}
  Note that the problem mentioned at the end of \S\ref{sec:der-func}
  does not arise in the double-categorical context.  It is perfectly
  possible to have a 2-cell $\alpha$ in \uModel\ which is a natural
  isomorphism, but for which $\Ho(\alpha)$ is \emph{not} an
  isomorphism.  This is because the fact that $\alpha$ is an
  isomorphism in \cCat\ is not visible to the double category
  \uModel, and hence need not be preserved by the functor \Ho.
\end{rmk}

\begin{rmk}
  \autoref{thm:htpy-dbl-psfr} admits various generalizations.  For
  instance, it is shown in~\cite{hovey:modelcats} that if \sV\ is a
  monoidal model category, then the homotopy category of any \sV-model
  category is enriched over $\Ho(\sV)$; in this way we can construct a
  double pseudofunctor from \sV-model categories to
  $\Ho(\sV)$-categories.  We could also lift the codomain of $\Ho$ to
  the double category $\uQCat=\uSq(\cQCat)$ of quasicategories
  (see~\cite{joyal:q_kan,joyal:quasi,lurie:higher-topoi}), where
  \cQCat\ is the 2-category of quasicategories described
  in~\cite{joyal:quasi}.  A third generalization is described in the
  next section.
\end{rmk}

\section{Derivable categories}
\label{sec:derivable-categories}

For many purposes, the powerful framework of model categories and
Quillen adjunctions is adequate, but there are some examples in which
it is too restrictive.  This includes many examples where we want to
compare left and right derived functors.  The problem is roughly that
when dealing with derived functors, we need more flexible notions of
``fibrant'' and ``cofibrant'' objects than are supplied by a model
structure.

In this section we describe an extension of the double pseudofunctor
\Ho\ from \uModel\ to a larger double category \uDrv\ of
\emph{derivable categories}.  This generalization also serves to
clarify the essential properties necessary for the definition of
derived functors and the double pseudofunctor \Ho.

\begin{defn}\label{defn:dercat}
  A \textbf{derivable structure} on a category \sC\ consists of:
  \begin{enumerate}
  \item a class of ``weak equivalences'' satisfying the 2-out-of-3
    property,
  \item full subcategories $\sC_Q$ and $\sC_R$,
  \item a functor $Q\maps \sC\to\sC$, whose image is contained in
    $\sC_Q$, and a natural weak equivalence $\pi\maps Q\rightwe
    \Id_\sC$, and
  \item a functor $R\maps \sC\to\sC$, whose image is contained in
    $\sC_R$, and a natural weak equivalence $\rho\maps \Id_\sC\rightwe
    R$, such that
  \item either $Q(\sC_R)\subset \sC_R$ or $R(\sC_Q)\subset
    \sC_Q$.\label{item:hocat-QR}
  \end{enumerate}
  A category equipped with a derivable structure is called a
  \textbf{derivable category}.
\end{defn}

In any derivable category we write $\sC_{QR} = \sC_Q\cap\sC_R$.  The
importance of condition~\ref{item:hocat-QR} is visible in the
following lemma.

\begin{lem}\label{lem:QR-eqv}
  If \sC\ is a derivable category, then every object is connected by a
  zigzag of weak equivalences to an object in $\sC_{QR}$.
\end{lem}
\begin{proof}
  If $Q(\sC_R)\subset\sC_R$, possible zigzags are
  \[\xymatrix{X \ar[r]^-\rho_-\sim &
    RX \ar@{<->}[r]^-{\pi R}_-\sim & QRX} \quad\text{and}\quad
  \xymatrix{X \ar@{<-}[r]^-\pi_-\sim &
    Q X \ar[r]^-{Q\rho}_-\sim & QRX}.\]
  Similarly, if $R(\sC_Q)\subset \sC_Q$, possible zigzags are
  \[\xymatrix{X \ar[r]^-\rho_-\sim &
    RX \ar@{<->}[r]^-{R \pi}_-\sim & RQX} \quad\text{and}\quad
  \xymatrix{X \ar@{<-}[r]^-\pi_-\sim &
    Q X \ar[r]^-{\rho Q}_-\sim & RQX}.\qedhere\]
\end{proof}

\begin{eg}
  Of course, any model category \sC\ is a derivable category if we
  take $\sC_Q$ and $\sC_R$ to be its full subcategories of cofibrant
  and fibrant objects, respectively.  In this case both disjuncts
  of~\ref{item:hocat-QR} can be satisfied at once.
\end{eg}

\begin{eg}\label{eg:drv-triv}
  If \sC\ is any category equipped with a class of weak equivalences
  satisfying the 2-out-of-3 property, we can make it into a derivable
  category with $\sC_Q=\sC_R=\sC$ and $Q=R=\Id$.
\end{eg}

\begin{eg}
  The product of two derivable categories is derivable, with a
  pointwise structure.  Also, the opposite of any derivable category
  is also derivable (simply switch $\sC_Q$ and $\sC_R$).
\end{eg}

We will see other examples of derivable categories in
\S\S\ref{sec:beck-chev-cond}--\ref{sec:projfmla-ex}.

Any derivable category \sC\ has a homotopy category $\Ho(\sC)$
obtained by formally inverting its weak equivalences (though
$\Ho(\sC)$ may not have small hom-sets without additional assumptions
on \sC).  An equivalent homotopy category is obtained by inverting the weak
equivalences in $\sC_Q$, $\sC_R$, or (by \autoref{lem:QR-eqv})
$\sC_{QR}$.

\begin{defn}
  If \sC\ and \sD\ are derivable categories, a functor $f\maps
  \sC\to\sD$ is \textbf{left derivable} if
  \begin{enumerate}
  \item it preserves weak equivalences in $\sC_Q$ and\label{item:htp-lder-i}
  \item $f(\sC_Q)\subset \sD_Q$.\label{item:htp-lder-ii}
  \end{enumerate}
  The dual notion is \textbf{right derivable}.
\end{defn}

Condition~\ref{item:htp-lder-i} ensures that any left derivable
$f\maps \sC\to\sD$ has a left derived functor $\bL f\maps
\Ho(\sC)\to\Ho(\sD)$, defined to be induced by the composite $f\circ
Q\maps \sC\to\sD$.  Condition~\ref{item:htp-lder-ii} ensures that the
composite of two left derivable functors is again left derivable.

\begin{eg}
  Any left Quillen functor between model categories is left derivable.
  Similarly, any right Quillen functor is right derivable.
\end{eg}

\begin{eg}
  If \sC\ and \sD\ are derivable categories in which $Q=R=\Id$, as in
  \autoref{eg:drv-triv}, then a functor $\sC\to\sD$ is left or right
  derivable just when it preserves all weak equivalences.
\end{eg}

\begin{defn}
  We write \uDrv\ for the double category whose objects are derivable
  categories, whose vertical arrows are left derivable functors, whose
  horizontal arrows are right derivable functors, and whose 2-cells
  are arbitrary natural transformations.
\end{defn}

Since every model category is a derivable category and every Quillen
functor is derivable, we have a forgetful functor $\uModel\to\uDrv$.

\begin{thm}\label{thm:hocat-dblpsfr}
  There is a double pseudofunctor
  \[\Ho\maps \uDrv\to\uCat\]
  sending each object \sC\ to $\Ho(\sC)$, each vertical 1-cell $f$ to
  $\bL f$, each horizontal 1-cell $g$ to $\bR g$, and each 2-cell to a
  derived transformation defined as in \autoref{thm:htpy-dbl-psfr}.
\end{thm}
\begin{proof}
  This is a slight generalization of the proof of
  \autoref{thm:htpy-dbl-psfr}.  Pseudofunctoriality in each direction
  follows exactly as in that case.  However, we are now forced to
  choose between the defining composites~\eqref{eq:dernat-composite}
  and~\eqref{eq:dernat-composite-alt} based on whether
  $Q(\sD_R)\subset \sD_R$ or $R(\sD_Q)\subset \sD_Q$, since we have
  only required one or the other to hold.  Note that in either case,
  \autoref{lem:QR-eqv} enables us to use the simpler version of
  \autoref{thm:dernat-simple}.  Additional diagram chases, which
  differ inconsequentially from those in Figure~\ref{fig:dblnat-1},
  are required to verify the double naturality
  axioms for the composite of two 2-cells where one
  uses~\eqref{eq:dernat-composite} and the
  other~\eqref{eq:dernat-composite-alt}.
\end{proof}

This theorem is applied in the same way as
\autoref{thm:htpy-dbl-psfr}.  For example, we have the following
immediate corollaries.

\begin{cor}
  If $f\maps \sC\to\sD$ is a functor which is both left and right
  derivable (relative to the same derivable structures), then $\bL
  f\iso \bR f$.
\end{cor}
\begin{proof}
  Such functors $f$ are precisely the strong morphisms in \uDrv.
\end{proof}

\begin{cor}\label{thm:drv-adjn}
  If $f\adj g$ is an adjunction between derivable categories in which
  $f$ is left derivable and $g$ is right derivable, then we have an
  adjunction $\bL f \adj \bR g$.
\end{cor}
\begin{proof}
  Such an adjunction is precisely a conjunction in \uDrv.
\end{proof}

We call a conjunction in \uDrv\ a \textbf{derivable adjunction}.

Of central importance for us, of course, is that \emph{mates} are
additionally preserved.  The rest of the paper will focus on example
applications of this fact.

\begin{eg}\label{eg:monoidal-model}
  The extra generality of derivable categories and functors can be
  useful even when simply comparing functors of the same handedness.
  For instance, if \sC\ is a \emph{monoidal model category}, then
  although its tensor product $\ten:\sC\times\sC\to\sC$ does satisfy a
  Quillen condition of sorts, it is not a left Quillen functor and not
  a morphism in \uModel.  However, it does preserve cofibrant objects
  and weak equivalences between cofibrant objects, so it is left
  derivable.

  In this way any such \sC\ becomes a \emph{pseudomonoid} in the
  2-category $\cV(\uDrv)$.  (A pseudomonoid is the 2-categorical
  ``internalization'' of a monoidal category.)  Since pseudomonoids
  are preserved by any product-preserving pseudofunctor, it follows
  immediately that $\Ho(\sC)$ is a monoidal category for any monoidal
  model category \sC\ (and more generally, for any ``monoidal
  derivable category'' \sC).
\end{eg}

\begin{rmk}
  In the terminology of~\cite{dhks:holim}, the subcategories $\sC_Q$
  and $\sC_R$ of a derivable category \sC\ are a left and right
  ``deformation retract'' of \sC, respectively, and our derivable
  functors are a special sort of ``deformable functors.''  The
  difference in viewpoint is that we consider $\sC_Q$ and $\sC_R$ to
  be given structure on the category \sC, whereas~\cite{dhks:holim}
  allows deformation retracts to vary with the functors under
  consideration.

  Of particular note is that if $f\colon \sC\toot\sD\spam g$ is a
  \emph{deformable adjunction} in the sense of~\cite{dhks:holim}, then
  it becomes a derivable adjunction in our sense if we choose $\sC_Q$
  to be a ``left $f$-deformation retract,'' $\sD_R$ to be a ``right
  $g$-deformation retract,'' and $\sC_R = \sC$ and $\sD_Q=\sD$.
  Therefore, the results of~\cite[\S44]{dhks:holim} on derived
  adjunctions of deformable adjunctions follow from our
  \autoref{thm:drv-adjn}, and~\cite[44.3]{dhks:holim} is then a
  special case of the preservation of mates by the double
  pseudofunctor $\Ho$.
\end{rmk}

\part{Applications}
\label{sec:applications}

\section{Base change for parametrized spaces}
\label{sec:beck-chev-cond}

As we saw in \S\ref{sec:mates}, a number of important questions can be
phrased in the form ``is the mate of such-and-such a transformation an
isomorphism or not?''  The fact that the double pseudofunctor \Ho\
preserves mates for conjunctions gives us a structured way to attack
such questions at the level of homotopy categories, by giving an
explicit formula for the mate of a derived transformation
$\Ho(\alpha)$---namely, it is the derived transformation of the mate
of $\alpha$.
In the remainder of the paper we present several worked examples of
how to apply this technique, taken both from folklore and from recent
work such as~\cite{maysig:pht,shulman:locconst}.

We begin with a simpler version of the situation
of~\cite{maysig:pht}, where we deal with \emph{unsectioned} spaces.
Let \bTop\ denote the category of compactly generated topological
spaces.  Then for any space $B$ we have a category $\bTop/B$, and for
any continuous $f\maps A\to B$ we have an adjunction
\[f_!\maps \bTop/A \toot \bTop/B\spam f^*,\]
where $f_!$ is given by composition with $f$, and $f^*$ is given by
pullback along $f$.  The categories $\bTop/B$ and functors $f^*$
assemble into a pseudofunctor $\bTop\op \to \cCat$.

\begin{lem}\label{thm:bc-top-pslevel}
  This pseudofunctor satisfies the Beck-Chevalley condition for any
  pullback square
  \[\vcenter{\xymatrix{A \ar[r]^h\ar[d]_f \pullbackcorner & B \ar[d]^g\\
      C\ar[r]_k & D.}}
  \]
  That is, for such a square, the canonical transformation $f_! h^*
  \to k^* g_!$ is an isomorphism.
\end{lem}
\begin{proof}
  This follows from an elementary lemma about pullback squares (and
  thus remains true if \bTop\ is replaced by any category with
  pullbacks).
\end{proof}

Now each category $\bTop/B$ inherits a model structure from the
``classical'' one on \bTop.  The weak equivalences are weak homotopy
equivalences of total spaces, and the fibrations are Serre fibrations
of total spaces (so in particular, the fibrant objects are Serre
fibrations over $B$).  With these model structures, each adjunction
$f_!\adj f^*$ is Quillen, so we have derived adjunctions $\bL f_!\adj
\bR f^*$, and it is natural to ask whether we still have isomorphisms
$\bL f_! \circ \bR h^* \iso \bR k^* \circ \bL g_!$.  This is no longer
true for all pullback squares (see \autoref{thm:not-bc}, below), but
the preservation of mates by the homotopy double pseudofunctor enables
us to give a sufficient condition for it to hold.

\begin{thm}\label{thm:bc-top}
  The derived pseudofunctor $B\mapsto \Ho(\bTop/B)$ satisfies the
  Beck-Chevalley condition for a pullback square
  \[\vcenter{\xymatrix{A \ar[r]^h\ar[d]_f \pullbackcorner & B \ar[d]^g\\
      C\ar[r]_k & D.}}
  \]
  as long as either $g$ or $k$ is a (Serre) fibration.
\end{thm}
\begin{proof}
  We have to show that the mate
  \begin{equation}
    \bL f_! \circ \bR h^* \too \bR k^* \circ \bL g_!\label{eq:bc-top-mate}
  \end{equation}
  of the isomorphism
  \begin{equation}
    \bR h^* \circ \bR g^* \iso \bR f^* \circ \bR k^*\label{eq:bc-top-orig}
  \end{equation}
  is itself an isomorphism.  Because the double pseudofunctor $\Ho$
  preserves mates, and~\eqref{eq:bc-top-orig} is the derived
  transformation of the isomorphism $h^*g^*\iso f^*k^*$, it follows
  that~\eqref{eq:bc-top-mate} is the derived transformation of the
  mate $f_! h^* \to k^* g_!$ (which is an isomorphism by
  \autoref{thm:bc-top-pslevel}).  Therefore, by
  \autoref{thm:dernat-simple},~\eqref{eq:bc-top-mate} is represented
  by the composite
  \[f_!Q h^* X \too f_!h^* X \too[\iso] k^*g_! X \too k^*R g_! X\]
  where $X$ is fibrant and cofibrant in $\bTop/C$, i.e.\ $X$ is a
  cofibrant space and $X\to C$ is a fibration.  We want to show that
  this composite is a weak equivalence.  But $f_!$ preserves all weak
  equivalences, since it is just given by composition, so $f_!Q h^* X
  \too f_!h^* X$ is always a weak equivalence.  Thus, it suffices to
  show that $k^*g_! X \too k^*R g_! X$ is also a weak equivalence;
  here is where we will use the hypothesis on $g$ or $k$.

  We know that $g_!X\to R g_!X$ is a weak equivalence, so it suffices
  to show that this weak equivalence is preserved by the functor
  $k^*$.  If $k$ is a fibration, then this is clear, since pullback
  along a fibration preserves all weak equivalences (i.e.\ \bTop\ is
  right proper).  On the other hand, if $g$ is a fibration, then
  $g_!X\to D$, being the composite $X\to C\xto{g} D$, is also a
  fibration, and thus $g_!X$ is fibrant in $\bTop/D$.  Therefore, since
  weak equivalences between fibrant objects are preserved by right
  Quillen functors, $k^*$ preserves the weak equivalence $g_!X\to
  Rg_!X$, as desired.
\end{proof}

\begin{rmk}
  The same proof applies with any model category replacing \bTop, as
  long as it is either right proper or we assume that the objects $C$
  and $D$ are fibrant.
\end{rmk}

\begin{rmk}
  Note that in the case when $k$ is a fibration, and hence so is $h$,
  all the functors $f^*$, $g^*$, $h^*$, $k^*$, $k_!$, and $h_!$ lie in
  $\cH(\uDrv)$.  Thus, this case of \autoref{thm:bc-top} could be
  deduced from the ordinary pseudofunctoriality of $\bR\maps
  \cH(\uDrv)\to\cCat$.  However, this is no longer the case when it is
  $g$ that is a fibration.
\end{rmk}

\begin{rmk}\label{thm:not-bc}
  The same techniques can also be used to show that a particular
  square \emph{violates} the Beck-Chevalley condition.  For instance,
  consider the situation of~\cite[Counterexample
  0.0.1]{maysig:pht}, where the pullback square is
  \[\vcenter{\xymatrix{\emptyset \ar[r]^h\ar[d]_f \pullbackcorner & \star \ar[d]^{g=1}\\
      \star\ar[r]_-{k=0} & [0,1].}}
  \]
  In this case the derived Beck-Chevalley transformation is represented
  by the composite
  \[f_!Q h^* X \too f_!h^* X \too[\iso] k^*g_! X \too k^*R g_! X
  \]
  where $X$ is a space fibrant and cofibrant over $\star$, i.e.\ just a
  cofibrant space.  Since $\bTop/\emptyset$ is trivial, $Q h^*X \to
  h^*X$ is an isomorphism $\emptyset\iso\emptyset$, and thus so is
  $f_!Q h^* X \to f_!h^* X$.  However, $k^*g_! X$ is also empty,
  whereas $k^*R g_! X$ is not (unless $X$ is itself empty); thus the
  composite cannot be a weak equivalence.
\end{rmk}

The situation of greater interest in \cite{maysig:pht} is more
complicated: instead of the category $\bTop/B$ of spaces over $B$, we
consider the category $\bEx_B$ of spaces over \emph{and} under $B$.
An object of $\bEx_B$, called an \emph{ex-space} over $B$, is a space
$X$ equipped with a projection $p\maps X\to B$ and a section $s\maps
B\to X$ such that $ps=\id_B$.  Once again for any $f\maps A\to B$ we
have an adjunction \[f_!\maps \bEx_A\toot \bEx_B\spam f^*\] where
$f^*$ is given by pullback, except that now $f_!$ is given by pushout
rather than mere composition.

\begin{rmk}\label{rmk:top-lcc}
  To ensure good behavior of pushouts, in the sectioned case we allow
  $X$ to be merely a $k$-space, but the base spaces must still be
  compactly generated; see~\cite[\S1.3]{maysig:pht}.
\end{rmk}

Each $\bEx_B$ again inherits a model structure from \bTop, although
in~\cite{maysig:pht} a certain ``$qf$-model structure'' is constructed
with better formal behavior.  For the purposes of the Beck-Chevalley
condition, however, it is most convenient to give $\bEx_B$ the
following derivable structure: we take $(\bEx_B)_R$ to consist of
ex-spaces whose projection is a Hurewicz fibration, and $(\bEx_B)_Q$
to consist of ex-spaces whose section is a fiberwise closed Hurewicz
cofibration.  In the terminology of~\cite{maysig:pht}, $(\bEx_B)_R$
consists of \emph{$h$-fibrant} objects, $(\bEx_B)_Q$ of
\emph{well-sectioned} or \emph{$\bar{f}$-cofibrant} objects, and
$(\bEx_B)_{QR}$ of \emph{ex-fibrations}.  In~\cite[\S8.3]{maysig:pht}
it is shown that there are functors $Q$ and $R$ making $\bEx_B$ into a
derivable category in this way.

By~\cite[8.2.2]{maysig:pht}, each functor $f_!$ preserves
well-sectioned ex-spaces, and by \cite[7.3.4]{maysig:pht} it preserves
weak equivalences between well-sectioned ex-spaces; thus it is left
derivable.  On the other hand, each functor $f^*$ certainly preserves
Hurewicz fibrations and weak equivalences between them; hence it is
right derivable.  It follows that each adjunction $f_!\adj f^*$ is a
derivable adjunction (i.e.\ a conjunction in \uDrv).

We now upgrade the proof of the Beck-Chevalley condition for ex-spaces
given in~\cite[9.4.6]{maysig:pht}, making the use of
\autoref{thm:hocat-dblpsfr} explicit and thus showing that the
isomorphism constructed is, in fact, the canonical Beck-Chevalley
transformation.

\begin{thm}
  The derived pseudofunctor $B\mapsto \Ho(\bEx_B)$ satisfies the
  Beck-Chevalley condition for a pullback square
  \[\vcenter{\xymatrix{A \ar[r]^h\ar[d]_f \pullbackcorner & B \ar[d]^g\\
      C\ar[r]_k & D.}}
  \]
  as long as either $g$ or $k$ is a Serre fibration.
\end{thm}
\begin{proof}
  As in the proof of \autoref{thm:bc-top}, we must show that the composite
  \[f_!Q h^* X \too f_!h^* X \too[\iso] k^*g_! X \too k^*R g_! X
  \]
  is a weak equivalence, where $X$ is an ex-fibration.  Now
  by~\cite[8.2.2]{maysig:pht}, $h^*$ preserves ex-fibrations, so in
  particular $h^*X$ is well-sectioned.  Thus $Qh^*X\to h^*X$ is a weak
  equivalence between well-sectioned ex-spaces, and so it is preserved
  by $f_!$.  It remains to show that the weak equivalence $g_! X \to
  R g_! X$ is preserved by $k^*$, and as before, this is evident if
  $k$ is itself a fibration.  If instead $g$ is a fibration, we factor
  $k$ as a homotopy equivalence followed by a Hurewicz fibration and
  consider the two cases separately.  The second case we have already
  dealt with, whereas if $k$ is a homotopy equivalence, then since $g$
  is a Serre fibration, $h$ is also a homotopy equivalence.  Hence
  by \cite[7.3.4]{maysig:pht}, the adjunctions $\bL h_!\adj \bR h^*$
  and $\bL k_!\adj \bR k^*$ are adjoint equivalences, and so in the
  composite
  \begin{equation}
    \xymatrix@C=3pc{\bL f_! \bR h^* \ar[r]^-{\eta \bL f_! \bR h^*} &
      \bR k^* \bL k_! \bL f_! \bR h^* \iso
      \bR k^* \bL g_! \bL h_! \bR h^* \ar[r]^-{\bR k^* \bL g_!\ep} &
      \bR k^* \bL g_!}\label{eq:bc-ex-othercomp}
  \end{equation}
  both $\eta$ and $\ep$ are isomorphisms.  (Note that this composite
  is not the composite we have taken to \emph{define} the
  Beck-Chevalley map; that would be
  \begin{equation}
    \xymatrix@C=3pc{\bL f_! \bR h^* \ar[r]^-{\bL f_! \bR h^*\eta} &
      \bL f_! \bR h^* \bR g^* \bL g_! \iso
      \bL f_! \bR f^* \bR k^* \bL g_! \ar[r]^-{\ep \bR k^* \bL g_!} &
      \bR k^* \bL g_!.}\label{eq:bc-ex-official}
  \end{equation}
  However, they are equal, because the isomorphism $\bL k_! \bL f_!
  \iso \bL g_! \bL h_!$ occurring in~\eqref{eq:bc-ex-othercomp} is the
  mate of the isomorphism $\bR h^* \bR g^*\iso \bR f^* \bR k^*$
  occurring in~\eqref{eq:bc-ex-official}.)
\end{proof}

Analogous proofs apply to the study of the category $\bSp_B$ of
\emph{ex-spectra} over $B$; we leave the rephrasing of these to the
interested reader.  Since the derived versions of $f_!$ and $f_*$ for
ex-spectra are parametrized versions of homology and cohomology, these
compatibility relations imply important calculational results.

\section{Base change for sheaves}
\label{sec:sheaves}

The examples in the previous section concerned \emph{spaces over
  spaces} as one way to to do homotopy theory over a base space.
Another widespread type of homotopy theory over a base space studies
\emph{sheaves} of various sorts.  There are many different types of
sheaves, of course, but almost all of them eventually require
comparisons of left and right derived functors.  For simplicity, we
will consider only the category of sheaves of abelian groups on a
topological space $A$, which we denote $\bSh(A)$.  We leave it to the
reader to apply the same language to sheaves on ringed spaces or
topoi, quasicoherent sheaves, simplicial sheaves, sheaves of spectra,
and so on.

The most noticeable difference between all sheaf-theoretic contexts
and that of spaces over spaces is that for a map $f\maps A\to B$ of
base spaces, the pullback functor $f^*\maps \bSh(B)\to \bSh(A)$ of
sheaves always has a well-behaved \emph{right} adjoint $f_*$, rather
than a left adjoint $f_!$.  Furthermore, the Beck-Chevalley condition
for these adjoints does not hold for all pullback squares even on the
point-set level: given a pullback square
\begin{equation}
  \vcenter{\xymatrix{A \ar[r]^h\ar[d]_f \pullbackcorner & B \ar[d]^g\\
      C\ar[r]_k & D.}}\label{eq:pbct-square}
\end{equation}
the Beck-Chevalley transformation $k^* g_* \to f_* h^*$ is only an
isomorphism under additional hypotheses.  Probably the most well-known
and useful result along these lines is the following, which is a
special case of the \emph{Proper Base Change Theorem}.

\begin{lem}[{\cite[2.5.11]{ks:shvs-mfds}}]\label{thm:pbct-pointset}
  If $g$ (and hence also $f$) is a proper map
  in~\eqref{eq:pbct-square} and all spaces involved are locally
  compact Hausdorff, then the Beck-Chevalley transformation $k^* g_*
  \to f_* h^*$ for sheaves is an isomorphism.
\end{lem}

Since the derived version of $f_*$ gives the sheaf-theoretic approach
to cohomology, it is again of importance when and whether this
isomorphism is preserved by passage to homotopy categories.  This is,
of course, a special case of the derived version of the Proper Base
Change Theorem; a very classical argument can be found (for instance)
in~\cite[2.6.7]{ks:shvs-mfds}.  Just as for spaces over spaces, the
preservation of mates by passage to derived functors is an implicit
ingredient in any proof of this result.  Here we sketch one such
proof, making this dependence explicit.

We write $\bCh^+(A)$ for the category of bounded below cochain
complexes of sheaves of abelian groups on $A$.  This category has a
model structure in which the weak equivalences are the
quasi-isomorphisms (homology isomorphisms), every object is cofibrant,
and the fibrant objects are the complexes of injectives.  Each
continuous map $f\maps A\to B$ induces an adjunction $f^*\maps
\bCh^+(B)\toot \bCh^+(A)\spam f_*$ which is Quillen with respect to
these model structures, so we have a derived adjunction $\bL f^*\adj
\bR f_*$.  (In fact, $f^*$ is exact and hence preserves all weak
equivalences, so one usually writes simply $f^*$ instead of $\bL
f^*$.)

\begin{thm}\label{thm:pbct-derived}
  If $g$ in~\eqref{eq:pbct-square} is proper and all spaces involved
  are locally compact Hausdorff, then the derived Beck-Chevalley
  transformation
  \[\bL k^* \circ \bR g_* \too \bR f_* \circ \bL h^*\]
  is an isomorphism.
\end{thm}
\begin{proof}
  The model structures mentioned above are sufficient for defining the
  derived functors, but for this proof we need to give $\bCh^+(A)$ a
  different derivable structure.  Recall that a sheaf $X$ on a space
  $A$ is \emph{c-soft} if sections of $X$ over compact subsets of $A$
  can be extended to all of $A$, i.e.\ if $\Gamma(A,X) \to
  \Gamma(K,X)$ is surjective for all compact $K\subset A$.  Given a
  continuous map $f\maps A\to B$, a sheaf $X$ on $A$ is said to be
  \emph{$f$-soft} if its restriction to every fiber of $f$ is c-soft.
  We say that a bounded below complex of sheaves is c-soft or $f$-soft
  if it consists of c-soft or $f$-soft sheaves.  Since every injective
  sheaf is c-soft and every c-soft sheaf is $f$-soft, every bounded
  below complex of sheaves has a c-soft or $f$-soft resolution.

  In particular, $\bCh^+(B)$ is a derivable category with $\bCh^+(B)_Q
  = \bCh^+(B)$ and $\bCh^+(B)_R$ the full subcategory of $g$-soft
  complexes, and likewise $\bCh^+(A)$ is a derivable category with
  $\bCh^+(A)_Q = \bCh^+(A)$ and $\bCh^+(A)_R$ the full subcategory of
  $f$-soft complexes.  We consider $\bCh^+(C)$ as a derivable category
  with $\bCh^+(C)_Q = \bCh^+(C)_R= \bCh^+(C)$, and likewise for
  $\bCh^+(D)$.  In all cases, the weak equivalences are the
  quasi-isomorphisms.

  Now $g_*$ preserves weak equivalences between $g$-soft complexes,
  and likewise for $f_*$ and $f$-soft complexes, so $g_*$ and $f_*$
  are right derivable functors.  Since the pullback functors $f^*$,
  $g^*$, $h^*$, $k^*$ preserve all weak equivalences, they are left
  derivable, so the mate correspondence in question takes place in
  \uDrv.  Therefore, the derived Beck-Chevalley transformation is the
  derived natural transformation of the point-set-level transformation
  $k^* g_*\to f_* h^*$, and thus is represented by the explicit
  composite
  \[ k^* Q g_* X \too k^* g_* X \too[\iso] f_* h^* X \too f_* R h^* X
  \]
  where $X\in\bCh^+(C)_{QR}$, i.e.\ $X$ is a $g$-soft complex of
  sheaves on $C$.  However, $Q$ is the identity functor, and the
  point-set transformation $k^* g_*\to f_* h^*$ is an isomorphism by
  \autoref{thm:pbct-pointset}, so it remains to show that $f_* h^* X
  \to f_* R h^* X$ is a weak equivalence.  This follows from the
  observation that $h^*$ takes $g$-soft sheaves to $f$-soft ones,
  which is true since the fibers of $f$ are the same as the fibers of
  $g$, and $h^*$ doesn't change the restrictions of $X$ to fibers.
\end{proof}

\begin{rmk}
  The category $\bCh(A)$ of \emph{unbounded} chain complexes of
  sheaves also admits a model structure in which the weak equivalences
  are the quasi-isomorphisms, every object is cofibrant, and the
  fibrant objects are the ``dg-injective'' or ``K-injective''
  complexes;
  see~\cite{joyal:unbdd-cxs-modelstr,spaltenstein:unbdd,beke:sheafifiable,gillespie:kap-der}.
  (This is true with any Grothendieck abelian category replacing
  $\bSh(A)$.)  However, it seems that \autoref{thm:pbct-derived} fails
  for unbounded complexes; see~\cite[6.5.4.3]{lurie:higher-topoi},
  where it is claimed that the solution is to use a more restrictive
  notion of weak equivalence than quasi-isomorphism.
\end{rmk}

The full version of the Proper Base Change Theorem involves defining a
new ``direct image with proper supports'' functor $f_!$ equipped with
a map $f_!\to f_*$ (which is an isomorphism when $f$ is proper), and
showing that the restricted transformation $\bL k^* \circ \bR g_!
\too \bR f_!  \circ \bL h^*$ is an isomorphism whether or not $f$ is
proper.  (This $f_!$ is not a left adjoint of $f^*$, but it serves a
similar function to the $f_!$ for parametrized spaces and spectra.
Whereas the $f_!$ for parametrized spectra is a version of homology,
this $f_!$ is a version of compactly supported cohomology, which plays
a similar role in duality theory.)  While this transformation is no
longer itself defined directly as a mate, mate correspondences are
still essential for comparing various different transformations
relating these functors; see~\cite{fhm:left-and-right}.

\section{Comparing homotopy theories}
\label{sec:comp-homot-theor}

One of the most important uses of model category theory is that it
provides a structured way to compare different ``models'' of the same
or similar homotopy theories.  The strongest such comparison is, of
course, a Quillen equivalence.  For example, there are many different
model categories of spectra, all of which are connected by a web of
Quillen equivalences (see~\cite{mmss,mm:eqosp-smod}); thus one can
work with whichever category of spectra is most appropriate for a
particular application.  However, in situations such as those
considered in the previous two sections, we often have not just two
homotopy theories, but two ``families'' of homotopy theories, each
consisting of many model categories related by base change functors.
Thus, we naturally want to compare not only the model categories
themselves in the two theories, but the base change functors as well.

For instance, one might hope that each ordinary model category of
spectra (or at least some of them) would have a parametrized version
$\bSp_B$ over any base space, with base change adjunctions $f_!\adj
f^*\adj f_*$ induced by any continuous map $f\maps A\to B$.  Given
some other family of categories of parametrized spectra---denoted
$\bSp_B'$, say---one would like to know not only that there are
Quillen equivalences $\bSp_B \toot \bSp_B'$, but that these
equivalences are compatible with the base change adjunctions, and thus
in particular compute the same notions of homology and cohomology.

In fact, only parametrized orthogonal spectra are studied at any length
in~\cite{maysig:pht}, and there are certain difficulties involved in
extending other models of spectra to the parametrized context
(see~\cite[Chapter 24]{maysig:pht}).  To get across the ideas involved
in such comparisons, therefore, we will consider a less hypothetical,
and also much less complicated, example of the same phenomenon:
comparing different model structures for parametrized spaces.

In \S\ref{sec:beck-chev-cond} we considered the model structure on
$\bTop/B$ induced from the ``Quillen'' or $q$-model structure on
\bTop, which is constructed from weak homotopy equivalences, Serre
fibrations, and relative cell complexes.  However, there are also
other model structures on \bTop\ which induce model structures on
$\bTop/B$.  In~\cite{strom:the-htpy-cat} it is shown that there is a
``Hurewicz'' or $h$-model structure constructed from homotopy
equivalences, Hurewicz fibrations, and Hurewicz cofibrations (see
also~\cite{schwanzl-vogt:strong-cofib}, \cite[Ch.~4]{maysig:pht}, and
\cite{cole:many-htpy-cats}).  And in~\cite{cole:mixed} it is shown
that there is also a ``mixed'' or $m$-model structure constructed from
weak homotopy equivalences, Hurewicz fibrations, and maps of the
homotopy type of relative cell complexes.  We have a pair of Quillen
adjunctions
\[\xymatrix{\bTop^q
  \ar@<1mm>[r]^{I^m_q} \ar@{<-}@<-1mm>[r]_{J^m_q} &
  \bTop^m
  \ar@<1mm>[r]^{I^h_m} \ar@{<-}@<-1mm>[r]_{J^h_m} &
  \bTop^h,}\]
where all the functors are the identity functor.
Each of these model structures lifts to $\bTop/B$, and we have
analogous Quillen adjunctions:
\begin{equation}
  \xymatrix{(\bTop/B)^q
    \ar@<1mm>[r]^{I^m_q} \ar@{<-}@<-1mm>[r]_{J^m_q} &
    (\bTop/B)^m
    \ar@<1mm>[r]^{I^h_m} \ar@{<-}@<-1mm>[r]_{J^h_m} &
    (\bTop/B)^h.}\label{eq:modelstr-top-qadj}
\end{equation}
In both cases the first adjunction, but not the second, is a Quillen
equivalence.
(The second is a \emph{left Quillen embedding} in the sense
of~\cite{shulman:locconst}, i.e.\ the derived left adjoint
$\Ho(\bTop^m) \to \Ho(\bTop^h)$ is full and faithful.)

Now the adjunctions $f_!\maps \bTop/A\toot \bTop/B\spam f^*$ are
Quillen relative to all three model structures, and the important
questions regard the commutativity of squares such as
\begin{equation*}
\vcenter{\xymatrix@-.5pc{(\bTop/B)^m \ar[r]^{f^*}\ar[d]_{J^m_q} \ar@{}[dr]|? &
    (\bTop/A)^m \ar[d]^{J^m_q}\\
    (\bTop/B)^q\ar[r]_{f^*} &
    (\bTop/A)^q}}
\quad
\text{and}
\quad
\vcenter{\xymatrix@-.5pc{(\bTop/A)^m \ar[r]^{f_!}\ar[d]_{J^m_q} \ar@{}[dr]|? &
    (\bTop/B)^m \ar[d]^{J^m_q}\\
    (\bTop/A)^q\ar[r]_{f_!} &
    (\bTop/B)^q,}}
\end{equation*}
particularly after passage to homotopy categories.  In general, the
point-set level question may already be interesting, but in our toy
example it is trivial: since all the functors
in~\eqref{eq:modelstr-top-qadj} are the identity functor, all such
squares obviously commute on the point-set level.

Moreover, the commutativity of some of these squares at the homotopy
category level is also easy: since $f^*$, $J^m_q$, and $J^h_m$ are
right Quillen, ordinary pseudofunctoriality of $\bR\maps \cModelr \to
\cCat$ gives us isomorphisms
\begin{align}
  \bR^q f^* \circ \bR J^m_q &\iso \bR J^m_q \circ \bR^m f^*
  \qquad\text{and}\\
  \bR^m f^* \circ \bR J^h_m &\iso \bR J^h_m \circ \bR^h f^*
\end{align}
(where we decorate $\bR$ to indicate which model structure we are
taking the derived functor with respect to).  Likewise, the left
derived functors of $f_!$ commute with those of $I^m_q$ and $I^h_m$:
\begin{align}
  \bL^m f_! \circ \bL I^m_q &\iso \bL I^m_q \circ \bL^q f_!
  \qquad\text{and}\\
  \bL^h f_! \circ \bL I^h_m &\iso \bL I^h_m \circ \bL^m f_!.
\end{align}
Note that these isomorphisms are actually mates of the previous ones.
We also have four additional mates relating composites of left and
right derived functors:
\begin{align}
  \bL^q f_! \circ \bR J^m_q &\too \bR J^m_q \circ \bL^m f_! \label{eq:shrk-J-mq}\\
  \bL^m f_! \circ \bR J^h_m &\too \bR J^h_m \circ \bL^h f_! \label{eq:shrk-J-hm}\\
  \bL I^m_q \circ \bR^q f^* &\too \bR^m f^* \circ \bL I^m_q \label{eq:star-I-mq} \qquad\text{and}\\
  \bL I^h_m \circ \bR^m f^* &\too \bR^h f^* \circ \bL I^h_m \label{eq:star-I-hm}.
\end{align}
(Each of these can actually be constructed in two different ways,
which give the same result by \autoref{lem:adj-composite-mate}.)  Now
since $I^m_q \adj J^m_q$ is a Quillen \emph{equivalence}, its derived
adjunction $\bL I^m_q \adj \bR J^m_q$ is an adjoint equivalence, so
\autoref{thm:adjeqv-mate-iso} implies that~\eqref{eq:shrk-J-mq}
and~\eqref{eq:star-I-mq} are also isomorphisms.  Thus, it remains to
consider~\eqref{eq:shrk-J-hm} and~\eqref{eq:star-I-hm}.  (Of course,
the $h$- and $q$-model structures can be compared by simply composing
the other two comparisons; double pseudofunctoriality shows that this
gives the same result as a direct treatment using $I^h_q = I^h_m \circ
I^m_q$ and $J^h_q = J^m_q\circ J^h_m$.)

\begin{rmk}
  The situation with the $q$ and $m$ model structures is quite common,
  since the most desirable (and very frequently occurring) comparison
  between homotopy theories is a Quillen equivalence.  Thus,
  \autoref{thm:adjeqv-mate-iso} means that in most cases no fancy
  technology is required to compare functors.  However, comparisons
  which are not equivalences do occur.  The $h$ versus $m$/$q$ model
  structures is a fairly trivial example, but we will mention a more
  contentful one below.
\end{rmk}

\begin{prop}
  The derived transformation~\eqref{eq:shrk-J-hm} is an isomorphism.
\end{prop}
\begin{proof}
  By double pseudofunctoriality,~\eqref{eq:shrk-J-hm} is represented
  by the composite
  \[f_! (Q^m X) \too f_! X \too R^h( f_! X)
  \]
  where $X$ is $h$-fibrant and $h$-cofibrant over $A$, i.e.\ $X$ is an
  $h$-cofibrant space (a vacuous condition) and $X\to A$ is a Hurewicz
  fibration.  Since $f_!$ preserves all weak equivalences of any sort,
  the first map in this composite is a weak homotopy equivalence
  (i.e.\ a weak equivalence in the mixed model structure).  And the
  second map is by definition a homotopy equivalence, and therefore
  also a weak homotopy equivalence.
\end{proof}

On the other hand,~\eqref{eq:star-I-hm}
is \emph{not}, in general, an isomorphism.  Double pseudofunctoriality
tells us that it can be represented by the composite
\[Q^m(f^*X) \too f^*X \too f^* (R^h X)
\]
where $X$ is $m$-fibrant and $m$-cofibrant over $B$, i.e.\ $X$ is
$m$-cofibrant (of the homotopy type of a CW complex) and $X\to B$ is
an $m$-fibration.  Since $m$-fibrations are the same as Hurewicz
fibrations, $X$ is already $h$-fibrant; thus $X\to R^h X$ is an
$h$-equivalence between $h$-fibrant objects and so is preserved by
$f^*$.  However, if $f^*X$ is not $m$-cofibrant, then the \emph{weak}
homotopy equivalence $Q^m(f^*X) \to f^*X$ will not be an
$h$-equivalence (otherwise, $f^*X$ would be homotopy equivalent to the
$m$-cofibrant $Q^m(f^*X)$, hence $m$-cofibrant itself).  For example,
we could take $B=X=\star$ to be the one-point space and $A=f^*X$ to be
any non-$m$-cofibrant space.  Thus, in such a case the above composite
is not an $h$-equivalence, and so does not represent an isomorphism in
$\Ho((\bTop/A)^h)$.

\begin{rmk}
  The situation for ex-spaces is basically the same; we have the same
  set of Quillen adjunctions, and the same formal arguments apply to
  six out of the eight possible comparisons.  The analogue
  of~\eqref{eq:shrk-J-hm} is also an isomorphism, although the proof
  requires an invocation of the Gluing Lemma, while the analogue
  of~\eqref{eq:star-I-hm} is not an isomorphism.  We leave the details
  to the reader.
\end{rmk}




There are also interesting model structures on $\bTop/B$ that are not
inherited from model structures on \bTop.  One such is the
``$qf$-model structure'' of~\cite[Ch.\ 6]{maysig:pht}.  However, since
the identity functor is a Quillen equivalence between the $qf$-model
structure and the $q$-model structure, all compatibility relations
follow directly as before.

A more interesting question concerns the model structure on $\bTop/B$
constructed in~\cite{ij:ex-spaces}, which in~\cite{shulman:locconst}
we called the $ij$-model structure.  In this model structure the
cofibrations are built out of cells of the form $U\times D^n$, where
$U$ is any open subset of $B$.  Thus, fibrations and weak equivalences
are detected by ``spaces of sections'' over open sets; precise
definitions can be found in~\cite{ij:ex-spaces}.  Since this model
structure has a very sheaf-theoretic feel, it is unsurprising that it
is Quillen equivalent to a model category of simplicial presheaves on
$B$.  Furthermore, the pullback functor $f^*\maps \bTop/B\to\bTop/A$
is \emph{left} Quillen for the $ij$-model structures, just as it is
for sheaves, and because we have a Quillen equivalence, the same
arguments as before show that the derived adjunctions $\bL f^*\adj \bR
f_*$ agree for the $ij$-model structure and for simplicial presheaves.

The relationship of the $ij$-model structure to the other model
structures on $\bTop/B$ is more subtle.  We showed
in~\cite{shulman:locconst} that if $B$ is a locally compact CW
complex, then the identity adjunction of $\bTop/B$ is a Quillen
adjunction
\[\iota^\star\maps (\bTop/B)^{ij} \toot (\bTop/B)^m\spam \iota_\star,\]
but not a Quillen equivalence (though it is a \emph{right Quillen
  embedding}, i.e.\ the derived right adjoint is full and faithful).
We showed moreover that for any map $f\maps A\to B$ between locally
compact CW complexes, when the identity $f^*=f^*$ is considered as a
square
\[\vcenter{\xymatrix{(\bTop/B)^m \ar[r]^{\iota_\star}\ar@{=}[d]
    \drtwocell\omit& (\bTop/B)^{ij} \ar[d]^{f^*}\\
    (\bTop/B)^m \ar[r]_{\iota_\star \circ f^*} & (\bTop/A)^{ij}}}
\]
in \uModel, its derived natural transformation is an isomorphism
\[\bL^{ij} f^* \circ \bR\iota_\star \iso \bR \iota_\star \circ \bR^m f^*.\]
The proof involves a careful analysis of the explicit formula for a
derived natural transformation.  We also gave an analysis of the mate
of this isomorphism:
\[\bL\iota^\star\circ \bL^{ij} f^* \too \bR^m f^*\circ \bL\iota^\star\]
which turns out to be an isomorphism when $f$ is a $q$-fibration, but
not much more generally.  By \autoref{thm:hkid-mate-iso}, if $\bR^m
f^*$ happens to have a right adjoint $\bM f_*$ (in~\cite{maysig:pht}
such a right adjoint is constructed for connected ex-spaces or spectra
using Brown representability), the induced transformation
\[\bR\iota_\star \circ  \bM f_* \too \bR^{ij} f_* \circ \bR \iota_\star\]
is also an isomorphism whenever $f$ is a $q$-fibration.  This implies
that parametrized generalized cohomology can be computed via passage to
the $ij$-model structure, one of the main points
of~\cite{shulman:locconst}.  (In fact, this application was the
original motivation for the theory of the present paper.)

\section{The projection formula for sheaves}
\label{sec:mono-funct-proj}

We now consider monoidal structures and monoidal functors, as in
\autoref{eg:closed-monfunc}, beginning this time with sheaves.  The
usual ``injective'' model structure on chain complexes of sheaves is
not a monoidal model structure, and there is no analogue for sheaves
of the ``projective'' model structure for modules over a ring (which
is monoidal), but the remedy for this is well-known: we consider
\emph{flat} resolutions rather than projective ones.

It is proven in~\cite{gillespie:flat-shvs,gillespie:kap-der} that the
category $\bCh(A)$ of unbounded chain complexes of sheaves admits a
\emph{flat model structure} which is monoidal; its cofibrant objects
are \emph{dg-flat} complexes.  (For our purposes, all we actually need
is that all complexes admit dg-flat resolutions, which was proven
by~\cite{spaltenstein:unbdd}.)  Thus $\Ho(\bCh(A))$ is symmetric
monoidal; its tensor product $X\ten^\bL Y$ is represented by $QX\ten
QY$, where $Q$ denotes a dg-flat resolution.  In fact, tensoring with
a dg-flat complex preserves all weak equivalences, so $X\ten^\bL Y$
can equally be represented by $QX\ten Y$ or $X\ten QY$.

Now let $f\maps A\to B$ be a continuous map, which as always induces
an adjunction $f^*\maps \bCh(B)\toot\bCh(A) \spam f_*$.  This
adjunction is derivable for the flat model structures, and moreover
$f^*$ is strong monoidal.  Thus, by pseudofunctoriality in $\cV(\uDrv)$,
its left derived functor $\bL f^*$ is again strong monoidal.

We now regard the isomorphism $f^*(X\ten Y) \iso f^*X \ten f^*Y$, for
fixed $X$, as a transformation
\begin{equation}
\vcenter{\xymatrix{\bCh(B) \ar[r]^{f^*}\ar[d]_{X\ten -}
    \drtwocell\omit & \bCh(A) \ar[d]^{f^*X\ten -}\\
  \bCh(B)\ar[r]_{f^*} & \bCh(A).}}\label{eq:projfmla-orig}
\end{equation}
Thus, under the adjunction $f^*\adj f_*$, it has a mate
\begin{equation}
\vcenter{\xymatrix{\bCh(A) \ar[r]^{f_*}\ar[d]_{f^*X\ten -}
    \drtwocell\omit & \bCh(B)\ar[d]^{X\ten -}\\
    \bCh(A) \ar[r]_{f_*} & \bCh(B).}}\label{eq:projfmla-mate}
\end{equation}
The following is part of the (point-set level) \emph{projection
  formula}, and can be found in~\cite[2.5.13]{ks:shvs-mfds}.

\begin{lem}\label{thm:projfmla-pointset}
  If $f\maps A\to B$ is a proper map of locally compact Hausdorff
  spaces, then the component of~\eqref{eq:projfmla-mate} at any flat
  sheaf $X$ is an isomorphism
  \[X\ten f_*Y\iso f_*(f^*X \ten Y).\]
\end{lem}

Of course, generally of more interest is the derived projection
formula.  We follow our usual procedure of upgrading the standard
proof (in this case, taken from~\cite[2.6.6]{ks:shvs-mfds}) to use the
derived mate correspondence and show that the isomorphism constructed
is, in fact, the canonical comparison map.


\begin{thm}\label{thm:projfmla}
  If $f\maps A\to B$ is a proper map of locally compact Hausdorff
  spaces, then for any bounded below complexes $X$ and $Y$ of sheaves
  on $A$ and $B$, respectively, the isomorphism
  \begin{equation}
    \bL f^*(X\ten^\bL Y) \iso \bL f^*X \ten^\bL \bL f^*Y\label{eq:pf-strmon-der}
  \end{equation}
  (which exhibits $\bL f^*$ as strong monoidal) has a mate
  \begin{equation}
    X \ten^\bL \bR f_* Y \too \bR f_*(\bL f^*X \ten^\bL Y)\label{eq:pf-map}
  \end{equation}
  which is an isomorphism.
\end{thm}
\begin{proof}
  We may assume without loss of generality that $X$ is dg-flat, and
  hence so is $f^*X$ (which is isomorphic to $\bL f^*X$, since $f^*$
  is exact).  Since tensoring with a dg-flat complex preserves all
  weak equivalences, $(X\ten -)$ and $(f^*X\ten -)$ are left derivable
  when $\bCh(A)$ and $\bCh(B)$ are equipped with their
  \emph{injective} model structures (in which everything is cofibrant
  and the fibrant objects are the dg-injective complexes).  With these
  model structures,~\eqref{eq:projfmla-mate} becomes a 2-cell in
  \uDrv\ as drawn, while~\eqref{eq:projfmla-orig} is a 2-cell in
  \uDrv\ where all the arrows involved are vertical:
  \[\vcenter{\xymatrix{
      \bCh(B)\ar@{=}[r]\ar[d]_{X\ten -}  \ddrtwocell\omit &
      \bCh(B) \ar[d]^{f^*}\\
      \bCh(B)\ar[d]_{f^*} &
      \bCh(A)\ar[d]^{f^*X\ten -}\\
      \bCh(A)\ar@{=}[r] &\bCh(A).
    }}\]
  Thus, since~\eqref{eq:pf-strmon-der} is the derived transformation
  of~\eqref{eq:projfmla-orig}, its mate~\eqref{eq:pf-map} is in fact
  the derived natural transformation of~\eqref{eq:projfmla-mate}, and
  is therefore represented by the composite
  \[X\ten Q f_* Y \too X\ten f_* Y \too[\iso]
  f_*(f^*X \ten Y) \too f_* R(f^*X\ten Y)\]
  where $Y$ is dg-injective.  Since we have assumed that $X$ and $Y$
  are bounded below, we may assume that in fact $Y$ is a complex of
  injective sheaves.  Since $(X\ten -)$ preserves all weak
  equivalences, the first questionable map $X\ten Q f_* Y \too X\ten
  f_* Y$ is always a weak equivalence.  Now since $f^*X$ is flat and
  $Y$ is injective (hence c-soft), $(f^*X\ten Y)$ is $f$-soft (this
  follows from~\cite[2.5.12]{ks:shvs-mfds}).  But as we saw in
  \S\ref{sec:sheaves}, $f_*$ preserves weak equivalences between
  $f$-soft complexes, so the second questionable map $f_*(f^*X \ten Y)
  \too f_* R(f^*X\ten Y)$ is also a weak equivalence.
\end{proof}

As with the proper base change theorem, the full version of the
projection formula applies to all maps $f$, but replaces $f_*$ by the
``direct image with proper supports'' functor $f_!$.

\section{The projection formula for parametrized spaces}
\label{sec:projfmla-ex}

Finally, we consider the version of the projection formula proven
in~\cite{maysig:pht} for parametrized spaces.  In this case there is
no known \emph{monoidal} model structure on $\bTop/B$ that is
equivalent to the $q$-model structure, but we can still make do with a
derivable tensor product functor.  Recall that the cartesian monoidal
structure on $\bTop/B$ is given by pullback: $X\ten Y = X\times_B Y$.
Since pullbacks along fibrations preserve weak equivalences, the model
category $(\bTop/B)^q$ is a pseudomonoid in $\cH(\uDrv)$.  (Recall
from \autoref{eg:monoidal-model} that a monoidal model category is, in
particular, a pseudomonoid in $\cV(\uDrv)$.)  Therefore,
$\Ho(\bTop/B)$ (we drop the superscript $q$ from now on) is a monoidal
category; its monoidal structure is represented by $X\times_B^\bR Y =
RX \times_B RY$, where $R$ denotes replacement by a ($q$-)fibration.
Moreover, for any map $f\maps A\to B$, the functor $f^*$ is strong
monoidal and right derivable; thus $\bR f^*$ is again strong monoidal
by ordinary pseudofunctoriality.

Now the isomorphism $f^*X\times_A f^*Y \iso f^*(X\times_B Y)$ making $f^*$
strong monoidal can be viewed as a transformation
\begin{equation}
\vcenter{\xymatrix@C=3pc{
    \bTop/B \ar[d]_{f^*}\ar[r]^{X\times_B -} \drtwocell\omit{^} &
    \bTop/B\ar[d]^{f^*} \\
    \bTop/A\ar[r]_{f^*X\times_A -} &
    \bTop/A}}\label{eq:pf-ex-trans}
\end{equation}
which therefore has a mate under the adjunction $f_!\adj f^*$:
\begin{equation}
\vcenter{\xymatrix@C=3pc{
    \bTop/A\ar[r]^{f^*X\times_A -} \ar[d]_{f_!} \drtwocell\omit &
    \bTop/A\ar[d]^{f_!} \\
    \bTop/B \ar[r]_{X\times_B -} &
    \bTop/B }}\label{eq:pf-ex-mate}
\end{equation}
whose components are transformations
\begin{equation}
  f_!(f^*X \times_A Y) \too X\times_B f_!Y.\label{eq:ex-pf-ps}
\end{equation}

\begin{lem}
  The transformation~\eqref{eq:ex-pf-ps} is always an isomorphism.
\end{lem}
\begin{proof}
  This follows again from elementary facts about pullback squares (and
  hence is true in any category with pullbacks).
\end{proof}

Once again, we are interested in the derived version.

\begin{thm}
  The isomorphism
  \begin{equation}
    \bR f^*X\times^\bR_A \bR f^*Y \iso \bR f^*(X\times^\bR_B Y)\label{eq:pf-ex-dertr}
  \end{equation}
  (which exhibits $\bR f^*$ as strong monoidal) has a mate
  \begin{equation}
    \bL f_!(\bR f^*X \times^\bR_A Y) \too X\times^\bR_B \bL f_!Y\label{eq:pf-ex-dermate}
  \end{equation}
  which is an isomorphism.
\end{thm}
\begin{proof}
  Fix a fibrant $X$, so that $X\times_B-$ and $f^*X\times_A -$ are right
  derivable.  Then~\eqref{eq:pf-ex-mate} is a 2-cell in \uDrv\ as
  drawn, whereas~\eqref{eq:pf-ex-trans} is a 2-cell in \uDrv\ with all
  arrows horizontal:
  \[\vcenter{\xymatrix@C=3pc{
      \bTop/B \ar[r]^{f^*} \ar@{=}[d] \drrtwocell\omit &
      \bTop/A \ar[r]^{f^*X\times_A -} &
      \bTop/ A \ar@{=}[d]\\
      \bTop/B \ar[r]_{X\times_B -} &
      \bTop/B \ar[r]_{f^*} &
      \bTop/A 
    }}
  \]
  Thus, since~\eqref{eq:pf-ex-dertr} is the derived transformation
  of~\eqref{eq:pf-ex-trans}, it follows that its
  mate~\eqref{eq:pf-ex-dermate} is the derived transformation
  of~\eqref{eq:pf-ex-mate}, and thus can be represented by the
  composite
  \[f_! Q(f^*X\times_A Y) \too f_!(f^*X \times_A Y) \too[\iso]
  X\times_B f_!Y \too X\times_B R (f_!Y)
  \]
  where $Y$ is fibrant and cofibrant over $A$.  Now as in
  \S\ref{sec:beck-chev-cond}, $f_!$ preserves all weak equivalences,
  so the first map $f_! Q(f^*X\times_A Y) \to f_!(f^*X \times_A Y)$ is
  a weak equivalence.  Likewise, since $X$ is fibrant, $X\times_B -$
  actually preserves all weak equivalences (that is, $\bTop$ is right
  proper); thus the second map $X\times_B f_!Y \to X\times_B R (f_!Y)$
  is also a weak equivalence.
\end{proof}

The projection formulas for sectioned spaces is more subtle, since in
this case the monoidal structure is itself a composite of left and
right derived functors.  This is the only one of our examples in which
double pseudofunctoriality does not solve all our problems, since we
end up having to compose transformations in a way that is not possible
in the double category \uDrv.  Nevertheless, double
pseudofunctoriality does simplify the problem from a composite of four
transformations to a composite of two, and the rest of the argument is
completed by a diagram chase.  (This diagram chase is exactly the sort
of argument that would have to be made explicit in all the other
examples, if we didn't have double pseudofunctoriality to invoke.  In
other words, Figures~\ref{fig:dblnat-1} and~\ref{fig:dblnat-2} in the
proof of \autoref{thm:htpy-dbl-psfr} have the effect of chasing all
such diagrams once and for all.)

Before defining the fiberwise smash product, we first define the
\textbf{external smash product} of ex-spaces $X\in \bEx_A$ and $Y\in
\bEx_B$ by the following pushout:
\[\vcenter{\xymatrix{(X\times B) \sqcup_{A\times B} (A\times Y) \ar[r]\ar[d] & X\times Y \ar[d]\\
  A\times B\ar[r] & X\exsm Y}}
\]
This produces an ex-space $X\exsm Y\in \bEx_{A\times B}$, and thereby
defines a functor
\[\exsm\maps \bEx_A \times \bEx_B \too \bEx_{A\times B}.\]
Note that the fiber of $X\exsm Y$ over $(a,b)\in A\times B$ is $X_a
\wedge Y_b$.  The external smash product is coherently associative,
unital, and symmetric, and furthermore
we have isomorphisms
\begin{equation}
  f^*X \exsm g^*Y \iso (f\times g)^*(X\exsm Y)\label{eq:pb-exsm}
\end{equation}
satisfying their own coherence conditions.  In the terminology
of~\cite{shulman:frbi}, this structure makes $\bEx_{(-)}$ into a
\emph{monoidal fibration} over \bTop.

We then define the \textbf{fiberwise smash product} of $X, Y\in
\bEx_A$ to be \[X\sm_A Y = \Delta_A^*(X\exsm Y),\] where $\Delta_A\maps
A\to A\times A$ is the diagonal map.  The
isomorphisms~\eqref{eq:pb-exsm}, together with the coherence
isomorphisms of $\exsm$, can be used to construct coherence
isomorphisms making $\sm_A$ a symmetric monoidal structure on $\bEx_A$.
Furthermore, we have isomorphisms
\begin{equation}
  \begin{aligned}
    f^*(X\sm_B Y) &= f^*\Delta_B^*(X\exsm Y)\\
    &\iso \Delta_A^* (f\times f)^*(X\exsm Y)\\
    &\iso \Delta_A^* (f^*X \exsm f^*Y)\\
    &= f^*X \sm_A f^*Y
  \end{aligned}\label{eq:pb-sm-ps}
\end{equation}
making each functor $f^*\maps \bEx_B\to\bEx_A$ strong symmetric
monoidal.

\begin{rmk}
  In~\cite{maysig:pht} this construction is performed in the other
  direction, starting from the fiberwise smash product and producing
  the external one.  See \cite{shulman:frbi} for a full equivalence of
  the two notions.
\end{rmk}

We will use the same derivable structure for ex-spaces as in
\S\ref{sec:beck-chev-cond}, where $(\bEx_A)_{QR}$ consists of
ex-fibrations.  Recall that in this case each adjunction $f_!\adj f^*$
is derivable, and in particular $f^*$ is right derivable.  On the
other hand, it is not hard to see, using the ``gluing lemma,'' that
the functor $\exsm$ is \emph{left} derivable.  Thus, $\sm_A$ is a
composite of a left and a right derivable functor, and so $\bEx_A$ is
not even a pseudomonoid in $\cV(\uDrv)$ or $\cH(\uDrv)$.  We can,
however, produce a symmetric monoidal structure on $\Ho(\bEx_A)$ in
the same way that we did on the point-set level, provided that the
isomorphisms~\eqref{eq:pb-exsm} descend to derived functors.  This is
precisely the argument used in~\cite[\S9.4]{maysig:pht}.

\begin{lem}
  When~\eqref{eq:pb-exsm} is regarded as a 2-cell
  \[\vcenter{\xymatrix{\bEx_C \times \bEx_D
      \ar[r]^{f^*\times g^*}\ar[d]_{\exsm} \drtwocell\omit &
      \bEx_A\times \bEx_B \ar[d]^{\exsm}\\
      \bEx_{C\times D}\ar[r]_{(f\times g)^*} & \bEx_{A\times B}}}
  \]
  in \uDrv, its derived natural transformation is an isomorphism.
\end{lem}
\begin{proof}
  Its derived natural transformation is represented by the composite
  \[Q f^*X\exsm Q g^*Y \too f^*X \exsm g^*Y
  \too[\iso] (f\times g)^*(X\exsm Y)
  \too (f\times g)^*R(X\exsm Y).
  \]
  where $X$ and $Y$ are ex-fibrations over $A$ and $B$, respectively.
  But by~\cite[\S8.2]{maysig:pht}, the functors $f^*$, $g^*$, and
  $\exsm$ all preserve ex-fibrations, so both questionable maps in
  this composite are weak equivalences.
\end{proof}

In particular, we have isomorphisms
\begin{equation}
  \bigl(\bR f^*X\bigr) \exsm^\bL \bigl(\bR g^*Y\bigr)
  \iso \bR(f\times g)^*\left(X\exsm^\bL Y\right).\label{eq:der-pb-exsm}
\end{equation}
The associativity, unit, and symmetry isomorphisms for $\exsm$ descend
to $\exsm^\bL$ (by ordinary pseudofunctoriality), as do the
compatibility axioms between these and~\eqref{eq:pb-exsm} (by double
pseudofunctoriality).  Therefore, we can define the ``middle derived
fiberwise smash product''
\[X \sm_A^\bM Y = \bR \Delta_A^* \left(X \exsm^\bL Y\right)\]
and show:

\begin{thm}
  Each $\sm^\bM$ is a symmetric monoidal structure on $\Ho(\bEx_A)$,
  and each functor $\bR f^*$ is strong symmetric monoidal.
\end{thm}

Now we consider the projection formula.  As in the unsectioned case,
on the point-set level we have a canonical morphism
\begin{equation}
  f_!(f^*X \sm_A Y) \too X\sm_B f_!Y\label{eq:sm-pf-ps}
\end{equation}
defined as the mate of $f^*X\sm_A f^*Y \iso f^*(X\sm_B Y)$, and once
again we have:

\begin{lem}
  The transformation~\eqref{eq:sm-pf-ps} is always an isomorphism.
\end{lem}
\begin{proof}
  This is a straightforward computation with limits and colimits,
  using only the fact that colimits are preserved by pullback in our
  convenient category of spaces (see \autoref{rmk:top-lcc}).
\end{proof}

In order to pass to derived functors, we need to rephrase this in
terms of the external smash product.  The morphism~\eqref{eq:sm-pf-ps}
becomes
\begin{equation}
  f_!\Delta_A^*(f^*X \exsm Y) \too \Delta_B^*(X\exsm f_!Y).\label{eq:exsm-pf-ps}
\end{equation}
The definition of~\eqref{eq:sm-pf-ps} as a mate of the
isomorphism~\eqref{eq:pb-sm-ps} then becomes the following definition
of~\eqref{eq:exsm-pf-ps} in terms of~\eqref{eq:pb-exsm}:
\[\vcenter{\xymatrix{
    \bEx_A \ar@{=}[r]\ar[d]_{f_!} \drtwocell\omit & 
    \bEx_A \ar@{=}[d] &
    \\
    \bEx_B \ar[r]^{f^*}\ar[d]_{X \exsm -} \drtwocell\omit &
    \bEx_A \ar[d]^{f^*X\exsm -} &
    \\
    \bEx_{B\times B} \ar[r]_{(f\times f)^*}\ar@{=}[d] \drrtwocell\omit{\iso} &
    \bEx_{A\times A} \ar[r]_{\Delta_A^*} &
    \bEx_A \ar@{=}[d]\\
    \bEx_{B\times B} \ar[r]_{\Delta_B^*} &
    \bEx_B \ar[r]^{f^*} \ar@{=}[d] \drtwocell\omit &
    \bEx_A \ar[d]^{f_!}\\
    &
    \bEx_B \ar@{=}[r] &
    \bEx_B
    }}
\]
If we choose $X$ to be an ex-fibration (in which case $f^*X$ is also),
then all the 2-cells in this diagram live in \uDrv\ as drawn.
However, this diagram cannot be \emph{composed} in \uDrv, so a single
application of functoriality is insufficient.  Instead, we can break
it into two composites, each of which can be composed in \uDrv.  (In
fact, each is the construction of a mate.)
\begin{equation}
\vcenter{\xymatrix{
    \bEx_A \ar@{=}[r]\ar[d]_{f_!} \drtwocell\omit & 
    \bEx_A \ar@{=}[d]
    \\
    \bEx_B \ar[r]^{f^*}\ar[d]_{X \exsm -} \drtwocell\omit &
    \bEx_A \ar[d]^{f^*X\exsm -}
    \\
    \bEx_{B\times B} \ar[r]_{(f\times f)^*} &
    \bEx_{A\times A} }}
  \quad\text{and}\qquad
  \vcenter{\xymatrix{
    \bEx_{B\times B} \ar[r]^{(f\times f)^*}\ar@{=}[d] \drrtwocell\omit{\iso} &
    \bEx_{A\times A} \ar[r]^{\Delta_A^*} &
    \bEx_A \ar@{=}[d]\\
    \bEx_{B\times B} \ar[r]^-{\Delta_B^*} \ar@{=}[d] \ar@{}[dr]|\iso &
    \bEx_B \ar[r]^{f^*} \ar@{=}[d] \drtwocell\omit &
    \bEx_A \ar[d]^{f_!}\\
    \bEx_{B\times B} \ar[r]_-{\Delta_B}&
    \bEx_B \ar@{=}[r] &
    \bEx_B
    }}\label{eq:expf-broken-2cells}
\end{equation}
Now consider the transformation
\begin{equation}
  \bL f_! \Bigl(\bR \Delta_A^*\Bigl(\bR f^*X \exsm^\bL Y\Bigr)\Bigr) \too
  \bR \Delta_B^*\Bigl(X\exsm^\bL \bL f_!Y\Bigr),\label{eq:der-exsm-pf}
\end{equation}
defined from~\eqref{eq:der-pb-exsm} in the same way
that~\eqref{eq:exsm-pf-ps} is defined from~\eqref{eq:pb-exsm}.  For
the same reasons, it is the composite of the morphisms corresponding
to~\eqref{eq:expf-broken-2cells} at the level of homotopy categories.
But, by pseudofunctoriality, the derived version of each of these is,
in fact, the derived natural transformation of the corresponding
point-set-level transformation (although this doesn't apply to the
entire composite, which can't be composed in \uDrv).
Thus,~\eqref{eq:der-exsm-pf} is equal to the composite
\begin{align*}
  \bL f_! \left(\bR \Delta_A^*\left(\bR f^*X \exsm^\bL Y\right)\right)
  &\too \bL f_! \left(\bR \Delta_A^* \left(\bR \left(f\times f\right)^* \left(X \exsm^\bL \bL f_! Y\right)\right)\right)\\
  &\too\bR \Delta_B^*\left(X\exsm^\bL \bL f_!Y\right).
\end{align*}
Here the first map is $\bL f_! \circ \bR \Delta_A^*$ applied to the
derived transformation of
\[f^*X \exsm Y \too (f\times f)^*(X\exsm f_!Y)\]
and the second is the component at $X\exsm^\bL f_!Y$ of the derived
transformation of
\[f_! \Delta_A^* (f\times f)^* \too \Delta_B^*.
\]
Filling in the definition of derived natural transformations, along
with a pseudofunctoriality constraint for the composite $\Delta_A^*
(f\times f)^*$, we conclude that~\eqref{eq:der-exsm-pf} is represented
by the zigzag along the top-right of the diagram in
Figure~\ref{fig:exproj}, where $Y$ is an ex-fibration.

To show that this composite represents an isomorphism in
$\Ho(\bEx_B)$, we begin by ``following our nose,'' filling in
naturality properties and definitions, arriving at the zigzag along
the bottom-left of Figure~\ref{fig:exproj}.  (The region marked
$\circledast$ is the definition of~\eqref{eq:exsm-pf-ps}; all others
are naturality squares.)

\begin{figure}[!h]
  \resizebox{\textwidth}{!}{$\xymatrix@C=1pc@R=1.5pc{
      \framebox{$f_!Q\Delta_A^*R(f^*X\overline{\wedge}Y)$} \ar[r] &
      f_!Q\Delta_A^*R(f\times f)^*(X \overline{\wedge} f_! Y) \ar[r] &
      f_! Q \Delta_A^* R (f\times f)^* R (X \overline{\wedge} f_! Y) \\
      f_!Q \Delta_A^*(f^*X \overline{\wedge} Y) \ar[u]^\sim \ar[r] \ar[d]_\sim &
      f_!Q\Delta_A^* (f\times f)^*(X\overline{\wedge} f_!Y) \ar[u] \ar[r] \ar[d] &
      f_!Q\Delta_A^*(f\times f)^*R(X\overline{\wedge} f_! Y) \ar[d] \ar[u]_\sim \ar[r]^-\cong &
      f_!Qf^*\Delta_B^*R(X\overline{\wedge} f_! Y) \ar[dd] \\
      f_! \Delta_A^*(f^*X \overline{\wedge} Y)
      \ar[dd]_\cong \ar[r] \ar@{}[ddr]|(.4){\text{\LARGE$\circledast$}} &
      f_!\Delta_A^*(f\times f)^*(X\overline{\wedge} f_!Y) \ar[d]^\cong \ar[r] &
      f_! \Delta_A^*(f\times f)^*R(X\overline{\wedge} f_!Y) \ar[dr]^\cong \\
      & f_!f^*\Delta_B^*(X\overline{\wedge}f_!Y) \ar[dl] \ar[rr] &&
      \Delta_B^* R(X\overline{\wedge} Q f_! Y) \ar[d] \\
      \Delta_B^*(X\overline{\wedge} f_!Y) \ar[rrr]_{} &&&
      \framebox{$\Delta_B^* R(X\overline{\wedge} f_! Y)$}
    }$}
  \caption{The diagram chase for the projection formula}
  \label{fig:exproj}
\end{figure}

Thus it suffices to check that the maps along the bottom-left are weak
equivalences.  For the first two, this is because
$(f^*X\overline{\wedge} -)$ and $\Delta_B^*$ preserve ex-fibrations,
and the third is the isomorphism~\eqref{eq:exsm-pf-ps}.  We deal with
the fourth by replacing $f_! Y$ with an ex-fibration as follows:
\begin{equation}
\vcenter{\xymatrix{
    \Delta_B^*(X\overline{\wedge} f_!Y)
    \ar[r]
    &
    \Delta_B^* R(X\overline{\wedge} f_! Y)
    \\      
    \Delta_B^*(X\overline{\wedge}Qf_!Y)
    \ar[u]^\sim \ar[r] \ar[d]_\sim
    &
    \Delta_B^* R(X\overline{\wedge} Q f_! Y)
    \ar[u]_\sim \ar[d]^\sim
    \\
    \Delta_B^*(X\overline{\wedge} Q R f_!Y)
    \ar[r]_\sim
    &
    \Delta_B^*R(X \overline{\wedge} Q R f_! Y)
}}\label{eq:exproj-addl}
\end{equation}
Here the two left-hand vertical maps are weak equivalences because
when $X$ is an ex-fibration, $(X\wedge_A -) =
\Delta_A^*(X\overline{\wedge} -)$ preserves $h$-equivalences between
well-sectioned ex-spaces (see~\cite[8.2.6]{maysig:pht}).  The two
right-hand vertical maps are weak equivalences because
$(X\overline{\wedge} -)$ is left derivable and $f_!Y$, $Qf_!Y$, and
$QRf_!Y$ are well-sectioned.  Finally, the bottom horizontal map is a
weak equivalence because $QR f_!Y$ is an ex-fibration, hence so is
$X\overline{\wedge} QRf_!Y$.  Thus, we have the projection formula for
ex-spaces:

\begin{thm}
  For any map $f\maps A\to B$, the natural isomorphism
  \begin{equation}
    \bR f^*\Bigl(X \sm_B^\bM Y\Bigr) \iso \Bigl(\bR f^*X \sm_A^\bM \bR f^*Y\Bigr)\label{eq:exproj-strmon}
  \end{equation}
  (which exhibits $\bR f^*$ as strong monoidal) has a mate
  \begin{equation}
    \bL f_! \Bigl(\bR f^* X \sm_A^\bM Y\Bigr) \too \Bigl(X \sm_B^\bM \bL f_! Y\Bigr)\label{eq:exproj}
  \end{equation}
  which is also an isomorphism.\qed
\end{thm}

\begin{rmk}
  The isomorphism given in the proof of the projection formula
  in~\cite[9.4.5]{maysig:pht} is, essentially, the composite of the
  two maps along the left of~\eqref{eq:exproj-addl}:
  \[\Delta_B^*(X\exsm f_! Y) \leftwe \Delta_B^*(X \exsm Q f_! Y)
  \rightwe \Delta_B^*(X \exsm Q R f_! Y).
  \]
  The other weak equivalences in~\eqref{eq:exproj-addl} and
  Figure~\ref{fig:exproj} are implicitly present in the identification
  of the source and target of this zigzag as representing the source
  and target of~\eqref{eq:exproj}.  As always, the contribution of our
  theory is to identify this with the canonical comparison map, i.e.\
  the mate of~\eqref{eq:exproj-strmon}.
\end{rmk}

\begin{rmk}
  In fact, $\bEx_B$ is actually a \emph{closed} monoidal category, and
  it is shown in~\cite[\S9.3]{maysig:pht} (using Brown
  representability) that the subcategory of $\Ho(\bEx_B)$ consisting
  of connected spaces is also closed monoidal.  Now
  \autoref{lem:adj-composite-mate} implies that for a functor between
  closed monoidal categories, the projection formula morphism is the
  same as the map~\eqref{eq:closed-transf-mate} constructed via the
  closed structure.  As remarked in \autoref{eg:closed-monfunc}, it
  then follows from \autoref{thm:hkid-mate-iso} and the projection
  formula that $f^*$ and $\bR f^*$ are \emph{closed} monoidal functors
  (the latter only insofar as $\Ho(\bEx_B)$ is closed).  The analogous
  facts for spectra are true without any connectivity hypothesis.

  These results, which play an important role in~\cite{maysig:pht},
  seem impossible to approach without the technology of mates, since
  the internal-homs in $\Ho(\bEx_B)$ are so inexplicit.  Note in
  particular that for this argument to work, it is essential that the
  isomorphism in the projection formula is not just any isomorphism,
  but the particular map defined as a mate of the derived
  transformation~\eqref{eq:exproj-strmon}.
\end{rmk}

\bibliographystyle{halpha}
\raggedright
\bibliography{all,shulman}

\end{document}